\DeclareMathAlphabet{\pazocal}{OMS}{zplm}{m}{n}
\DeclareMathAlphabet{\pazocal}{OMS}{zplm}{m}{n}
\theoremstyle{plain}
\newtheorem{thm}{Theorem}[section]
\newtheorem{cor}[thm]{Corollary}
\newtheorem{lemma}[thm]{Lemma}
\newtheorem{prop}[thm]{Proposition}
\newtheorem{definition}[thm]{Definition}
\newtheorem{ex}[thm]{Example}
\theoremstyle{remark}
\newtheorem{remark}[thm]{Remark}
\numberwithin{equation}{section}
\newcommand{\eps}{\epsilon}
\newcommand{\Lb}{\pazocal{L}}
\newcommand{\R}{\mathbb{R}}
\newcommand{\T}{\pazocal{T}}
\newcommand{\Tb}{\mathbf{T}}
\newcommand{\X}{\pazocal{X}}
\newcommand{\Z}{\mathbb{Z}}
\newcommand{\lm}{\lambda}
\newcommand{\sst}{\Delta_n}
\newcommand{\st}{\Delta_{\infty}}
\newcommand{\ii}{\mathbf i}
\renewcommand{\d}{\mathrm{d}}
\newcommand{\pr}[1]{\mathbb{P}\left [ #1 \right]}
\newcommand{\E}[1]{\mathbb{E}\left [ #1 \right ]}
\newcommand{\ind}[1]{\mathbf{1}_{\{#1\}}}
\newcommand{\G}[1]{\Gamma \left(#1 \right)}
\newcommand{\dint}[2]{\frac{1}{(2\pi \mathbf{i})^2} \oint \limits_{C_z #1} dz \oint \limits_{C_w #2} dw\,}
\begin{document}

\begin{frontmatter}

\title{Random sorting networks: edge limit}

\begin{aug}

\author{Vadim Gorin \and Jiaming Xu}

\address[V.G.]{Departments of Statistics and Mathematics, University of California, Berkeley. vadicgor@gmail.com}

\address[J.X.]{Department of Mathematics, University of Wisconsin - Madison. jxu385@wisc.edu}

\end{aug}

\begin{keyword}[class=MSC]
\kwd{60B20}
\kwd{60B15}
\end{keyword}

\begin{keyword}
\kwd{Sorting network}
\kwd{spacing}
\kwd{antisymmetric Gaussian Unitary Ensemble}
\end{keyword}

\begin{abstract}
A sorting network is a shortest path from $1\ 2\ \dots\ n$ to $n\ \dots\ 2\ 1$ in the Cayley graph of the symmetric group $\mathfrak S_n$ spanned by adjacent transpositions. The paper computes the edge local limit of the uniformly random sorting networks as $n\to\infty$. We find the asymptotic distribution of the first occurrence of a given swap $(k,k+1)$ and identify it with the law of the smallest positive eigenvalue of a $2k\times 2k$ aGUE (an
aGUE matrix has purely imaginary Gaussian entries that are independently distributed
subject to skew-symmetry). Next, we give two different formal definitions of a spacing --- the time distance between the occurrence of a given swap $(k,k+1)$ in a uniformly random sorting network. Two definitions lead to two different expressions for the asymptotic laws expressed in terms of derivatives of Fredholm determinants.
\end{abstract}

\begin{abstract}
Un réseau de tri est un chemin le plus court de $1\ 2\ \dots\ n$ à $n\ \dots\ 2\ 1$ dans le graphe de Cayley du groupe symétrique $\mathfrak S_n$, engendré par des transpositions des éléments adjacents. Dans cet article nous calculons la limite locale au bord des réseaux de tri choisi uniformément quand $n\to\infty$. Nous trouvons la distribution asymptotique de la première occurrence  d'une transposition donnée $(k,k+1)$ et l'identifions avec la loi de la plus petite valeur propre positive d'un $2k\times 2k$ aGUE (une matrice aGUE a des entrées gaussiennes purement imaginaires qui sont distribuées indépendamment sous condition d'antisymétrie). Ensuite, nous considerons des espacements entre deux occurrences consecutives d'un échange donné (k, k + 1) pour un réseau de tri aléatoire choisi uniformément. Nous prenons deux formalisations pour un choix aléatoire d'un tel espacement. En passant à limite, ces deux définitions  conduisent à deux expressions différentes pour des lois asymptotiques exprimées en termes de dérivées des déterminants de Fredholm.
\end{abstract}

\end{frontmatter}


\section{Introduction} \label{sec:intro}

\subsection{Motivation}

The main object in this article is the uniformly random sorting network which we abbreviate as RSN.  Let us start by giving basic definitions. Consider the permutation group of $n$ elements $\mathfrak{S}_{n}$. We use the one-row notation for the elements of $\mathfrak{S}_n$ representing them as sequences $(a_{1}\ a_{2}\ ...\ a_{n})$.  We let $\tau_k$ be the adjacent swap $(k,k+1)$, $1\le k\le n-1$, so that
$$
 (a_{1}\ a_{2}\ \dots \ a_{n}) \cdot \tau_k= (a_{1}\ a_{2}\ a_{k-1}\ a_{k+1}\ a_{k}\ a_{k+2}\ \dots \ a_{n}).
$$
 A sorting network of size $n$ is a shortest sequence of swaps, whose product equals the reverse permutation $(n\ n-1\ ...\ 2\ 1)$. By counting inversions in permutations, one shows that the length $N$ of such shortest sequence is $N=\binom{n}{2}$.
 Let $\Omega_n$ denote the set of all sorting networks of size $n$. Thus, elements of $\Omega_n$ are sequences $(s_1,\dots,s_N)$ such that
 $$
  \tau_{s_1} \tau_{s_2}\cdots \tau_{s_N}= (n\ n-1\ ...\ 2\ 1).
 $$
 Sorting networks can be drawn as wiring diagrams, see Figure \ref{Figure_wiring}. Whenever $s_t=k$, we say that swap $(k,k+1)$ occurs at time $t$; in the wiring diagram this corresponds to two wires intersecting at the point $(t,k+\tfrac{1}{2})$.

\begin{figure}[t]
    \begin{center}
        \includegraphics[width=0.8\linewidth]{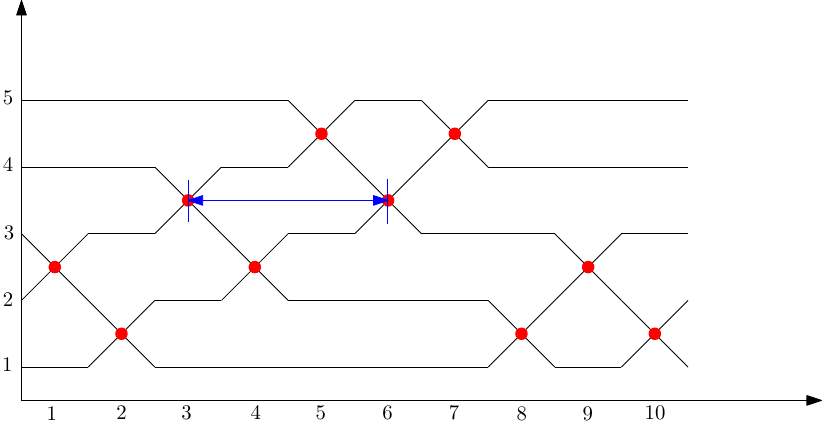}
        \caption{A sorting network $(s_1 \dots s_N)$, $N={n\choose 2}$ can be represented as a diagram of $n$ wires, with wires at heights $k$ and $k+1$ being swapped at time $i$ whenever $s_i=k$. The figure shows the wiring diagram of the sorting network $(2,1,3,2,4,3,4,1,2,1)$ with $n=5$.   The blue double arrow shows a spacing in row 3, which is a time interval between two adjacent swaps $\tau_{3}$ and it has length 3 in our example.\label{Figure_wiring}}
    \end{center}
\end{figure}
The study of sorting networks is a fruitful topic, with the first mathematical results going back to \cite{Stanley} where Stanley computed the number of elements in $\Omega_n$:
$$
  |\Omega_n|=\frac{{n\choose 2}!}{\prod_{j=1}^{n-1}(2n-1-2j)^j}.
$$
A recent point of interest is the study of uniformly random sorting networks initiated by Angel, Holroyd, Romik, and Virag in \cite{AHRV}. The probabilistic results can be split into two groups: global and local. On the global side, \cite{AHRV} proved that the density of swaps approximates the semi-circle law\footnote{There is no known direct connection to the Wigner semi-circle law of the random matrix theory and the match seems coincidental.} as $n\to\infty$; \cite{AHRV} predicted and \cite{DVi,D} proved (among other results) that individual trajectories in the wiring diagram become (random) sine curves; see also \cite{Kot,RVV} for related results.

Our paper belongs to the second group of results, \cite{AGH,Rozinov,ADHV,GR}, which studies local limits. The most basic local characteristic of the sorting network is \emph{spacing} in row $k$, which is a distance between two occurrences of the same swap $s_i=k$ in the sorting network, cf.\ Figure \ref{Figure_wiring}. \cite{Rozinov} and \cite{GR} discovered a link between the asymptotic laws of spacings as $n\to\infty$ and random matrix theory. Namely, \cite{Rozinov} matched the asymptotic law of the spacing in row $1$ with the distance between two eigenvalues of $2\times 2$ matrix of Gaussian Orthogonal Ensemble (GOE) of real symmetric random matrices.
 \cite{GR} dealt with spacings in row $\alpha n$ with $0<\alpha<1$ and showed that after proper rescaling they converge to the \emph{Gaudin--Mehta law}, which is the universal asymptotic law for spacings between eigenvalues of real symmetric matrices of very large sizes.

 Comparing the results of \cite{Rozinov} and \cite{GR}, we see that the former links the spacings in the extreme row (i.e.\ at the end-point of the edge) to $2\times 2$ real symmetric matrices, while the latter links the spacings in the bulk rows to the real symmetric matrices of infinite sizes. The observed gap in the asymptotics between $2\times 2$ and $\infty\times \infty$ matrices motivated the central question of our paper: we would like to understand, whether distributions related to $k\times k$ random matrices can be also found in the asymptotic laws for random sorting networks.

 The answer presented in this text is both Yes and No. On one side, we are so far unable to find any connections of sorting networks to eigenvalues of real symmetric $k\times k$ matrices (which would be the most direct interpolation between \cite{Rozinov} and \cite{GR}). On the other hand, by slightly adjusting our point view, we find that the law of the asymptotic spacing in row $k$ can be expressed in terms of the eigenvalues of $2k\times 2k$ random \emph{anti-symmetric} Gaussian matrices --- they are known in the literature as aGUE, since their analysis reveals connections to the tools used for the Gaussian Unitary Ensemble\footnote{However, up to multiplication by $\ii$, matrix elements of aGUE are real, rather than complex.}, see \cite[Chapter 13]{Mehta} and \cite{FN}.

\subsection{Main Results}

Here is the precise definition of the random matrix object appearing in our asymptotic results.

\begin{definition}\label{def:ague}
Let $Y$ be an $\ell \times \ell $ matrix whose entries are i.i.d.\ standard Gaussian random variables $N(0,1)$. Then
$$M_\ell=\frac{\ii}{2} \, (Y-Y^{T})$$
is called $\ell\times \ell$ anti-symmetric GUE or aGUE for being short.
\end{definition}
Note that the eigenvalues of $M_\ell$ are real and come in pairs: $\lambda$ is an eigenvalue if and only if so is $-\lambda$. When $\ell$ is odd, $M_\ell$ is necessary degenerate, i.e.\ it has a zero eigenvalue; for even $\ell$, zero is not an eigenvalue almost surely.

We also would like to deal with eigenvalues of all $M_\ell$ together.
\begin{definition}\label{def:cornerprocess}
Let $M$ be an infinite aGUE, i.e, an $\infty\times \infty$ matrix whose $l^\text{th}$ corner is a $l\times l$ aGUE. The aGUE--corners process is a point process  $\X$  on $\Z_{\ge 0}\times \R_{\ge 0}$: we put a particle at $(l,u)$ whenever $u$ is one of the $\lfloor\frac{l}{2}\rfloor$ positive eigenvalues of the top-left $l\times l$ corner of $M$.
\begin{remark}
aGUE is studied in \cite{FN}, and it turns out that its corners process $\X$ has a determinantal structure. We give the correlation kernel of $\X$ in Theorem \ref{thm:ague}.
\end{remark}
\end{definition}
\begin{definition} \label{Definition_aGUE_small}
 $\Tb(k)$ is the smallest positive eigenvalue of a $2k\times 2k$ aGUE.
\end{definition}
The determinantal structure for the distribution of eigenvalues of $M_{2k}$ (see \cite[Chapter 13]{Mehta} and \cite{FN}) leads to an expression for the distribution of  $\Tb(k)$ as a series, which can be identified with a Fredholm determinant:
$$ \pr{ \Tb(k)> t} = 1 + \sum_{l=1}^{\infty} \frac{(-1)^l}{l!} \int_0^t \dots \int_0^t \det[ K^{(k)}(u_{i},u_{j})] \, \d u_1 \cdots \d u_l,$$
where $K^{(k)}$ is a correlation kernel, expressed through the Hermite polynomials $H_i(x)$ as
\begin{equation}  \label{eq_Cor_intro} K^{(k)}(u_{1},u_{2})=e^{-\tfrac{1}{2}u_{1}^{2}-\tfrac{1}{2}u_{2}^{2}}\sum_{\ell=0}^{k-1}\frac{H_{2\ell}(u_{1})H_{2\ell}(u_{2})}{\sqrt{\pi} (2\ell)! 2^{2\ell-1} }. \end{equation}
Here we use the ``physicist's normalization'', so that $H_i(x)$, $i=0,1,\dots$, is a polynomial of degree $i$ with leading coefficient $2^i$ and such that \begin{equation} \label{eq_Hermite_ortho}
\int_{\mathbb R}H_{i}(x)H_{j}(x)e^{-x^{2}}dx=\delta_{ij}j!2^{j}\sqrt{\pi}.
\end{equation}

We can now state our first theorem.
\begin{thm}[First swapping law]\label{thm:firstswap}
 Fix $k\in \Z_{\ge 1}$, and let $\Tb_{\rm{FS},n}(k)$ denote the first occurrence time of the swap $(k,k+1)$ in a uniformly random sorting network $(s_1,\dots,s_N)\in \Omega_n$, $N={n\choose 2}$:
  $$
   \Tb_{\rm{FS},n}(k)=\min\{t\ge 1:\, s_t=k\}.
  $$
  Then we have the following convergence in law:
\begin{equation}
\label{eq_x13}
\lim_{n\rightarrow \infty} \frac{\Tb_{\rm{FS},n}(k)}{n^{\frac{3}{2}}} \, = \frac{1}{2} \Tb(k).
\end{equation}

\end{thm}
In order to connect the first swaps to the spacings, we are going to use translation invariance of the random sorting networks (cf.\ \cite[Theorem 1, (i)]{AHRV}), which is based on the following combinatorial fact:
\begin{prop}\label{prop:invariance} Let $N={n\choose 2}$. Then $(s_{1},...,s_{N})\in \Omega_{n}$ if and only if  $(s_{2},...,s_{N},n-s_{1})\in \Omega_{n}$.
\end{prop}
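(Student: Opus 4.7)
The plan is to exploit the fact that conjugation by the reverse permutation $w_0 = (n\ n{-}1\ \dots\ 1)$ acts on adjacent transpositions by $k \mapsto n-k$, combined with the involutive property $\tau_k^2 = \mathrm{id}$. Concretely, since $w_0(i) = n+1-i$, conjugation sends the transposition $(k, k+1)$ to the transposition $(n-k, n-k+1)$, giving the key identity
\begin{equation*}
 w_0 \, \tau_k \, w_0^{-1} = \tau_{n-k}, \qquad k = 1, \dots, n-1.
\end{equation*}

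Now assume $(s_1, \dots, s_N) \in \Omega_n$, so $\tau_{s_1} \tau_{s_2} \cdots \tau_{s_N} = w_0$. Left-multiplying by $\tau_{s_1}$ and using $\tau_{s_1}^2 = \mathrm{id}$ gives $\tau_{s_2} \cdots \tau_{s_N} = \tau_{s_1} w_0$. Right-multiplying by $\tau_{n-s_1}$ and applying the conjugation identity in the form $w_0 \, \tau_{n-s_1} = \tau_{s_1} \, w_0$ yields
\begin{equation*}
 \tau_{s_2} \cdots \tau_{s_N} \tau_{n-s_1} = \tau_{s_1} w_0 \tau_{n-s_1} = \tau_{s_1} \tau_{s_1} w_0 = w_0.
\end{equation*}
Since $s_1 \in \{1, \dots, n-1\}$ implies $n - s_1 \in \{1, \dots, n-1\}$, the shifted sequence $(s_2, \dots, s_N, n-s_1)$ is a length-$N$ word of adjacent transpositions whose product is $w_0$; as $N = \binom{n}{2}$ is the minimal possible length, it automatically lies in $\Omega_n$.

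For the converse direction, one applies the same manipulation in reverse: starting from $\tau_{s_2} \cdots \tau_{s_N} \tau_{n-s_1} = w_0$, right-multiply by $\tau_{n-s_1}$ to isolate $\tau_{s_2} \cdots \tau_{s_N} = w_0 \tau_{n-s_1} = \tau_{s_1} w_0$, then left-multiply by $\tau_{s_1}$ to recover $\tau_{s_1} \tau_{s_2} \cdots \tau_{s_N} = w_0$. The argument is essentially a one-line group-theoretic identity, so there is no real obstacle: the only thing worth being careful about is the bookkeeping of which side the multiplication happens on, and checking that the index $n - s_1$ remains in the admissible range $\{1, \dots, n-1\}$, both of which are immediate.
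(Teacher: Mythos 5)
Your proof is correct. The paper itself does not supply an argument for this statement: in Proposition \ref{prop:shiftinvariance}(a) it simply cites \cite{AHRV}, Theorem 1(i). Your self-contained derivation --- the conjugation identity $w_0\,\tau_k\,w_0^{-1}=\tau_{n-k}$ for $w_0(i)=n+1-i$, the cancellation $\tau_{s_1}^2=\mathrm{id}$, and the observation that any length-$N$ word of adjacent transpositions multiplying to $w_0$ is automatically reduced because $N=\binom{n}{2}$ is the number of inversions of $w_0$ --- is exactly the standard argument behind the cited result, and all the bookkeeping (which side to multiply on, and that $n-s_1\in\{1,\dots,n-1\}$) is handled correctly.
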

Applying Proposition \ref{prop:invariance} one readily proves that for all $1\le r \le r+t \le N$,
$$
  \pr{ \Tb_{\rm{FS},n}(k)> t} = \pr{ \text{there are no swaps } (k,k+1) \text{ at times }r+1,r+2,\dots,r+t},
$$
which can be used to identify the distribution of a spacing in row $k$. Before doing so, we need to specify the definition of a spacing. In fact, there are two natural definitions, which lead to distinct random variables. For the definition it is convenient to extend a sorting network to a $2N$--periodic $\mathbb Z$--indexed sequence:

\begin{definition} \label{Definition_periodic_extension}
 Given $(s_1\dots s_N)\in \Omega_n$, we extend it to a sequence $(s_t)_{t\in\mathbb Z}$ by requiring
 $s_{t+N}=n-s_t$ for all $t\in\mathbb Z$.
 We call $(s_t)_{t\in\mathbb Z}$ the \emph{periodic extension} of $(s_1\dots s_N)\in \Omega_n$.
\end{definition}
For instance $(1\, 2 \, 1)\in \Omega_3$ is extended to the infinite $\mathbb Z$--indexed sequence $(\dots 2\, 1 \, 2 \, 1 \, 2 \, \dots)$ with $1$s at odd positions and $2$s at even positions. The reason for this particular definition is contained in the following straightforward corollary of Proposition \ref{prop:invariance}:

\begin{cor}
\label{Corollary_translation_invariance}
 If $(s_1,\dots,s_N)$ is a uniformly random element of $\Omega_n$, then its periodic extension $(s_t)_{t\in\mathbb Z}$ is translation-invariant: for each $a\in\mathbb Z$ we have a distributional identity
 $$
  (s_{t+a})_{t\in\mathbb Z}\stackrel{d}{=} (s_t)_{t\in\mathbb Z}.
 $$
\end{cor}

\begin{definition}[First definition of the spacing]\label{def:first}
 Fix $n=1,2,\dots$, $k\in \{1,2,...,n-1\}$ and $a\in\mathbb Z$. Let $(s_t)_{t\in \mathbb Z}$ be the periodic extension of a uniformly random sorting network in $\Omega_n$. Set
 $$
  X=\max\{t\le a: s_t=k\}, \qquad Y=\min\{t> a: s_t=k\}.
 $$
 We define the spacing on the $k^\text{th}$ row of a sorting network of size $n$ as  $\mathrm{Sp}_{k,n}:=Y-X$.
\end{definition}
\begin{remark}
 While the definition depends on the choice of $a$, Corollary \ref{Corollary_translation_invariance} implies that the distribution of $\mathrm{Sp}_{k,n}$ is the same for all $a\in\mathbb Z$. Hence, we omit $a$ from the notation.
\end{remark}
The $n\to\infty$ limit of $\mathrm{Sp}_{k,n}$ turns out to be connected to  $\Tb(k)$ of Definition \ref{Definition_aGUE_small}.
\begin{thm}\label{thm:gap1}
Fix $k\in \Z_{\ge 1}$. We have the following convergence in distribution:
$$\lim_{n\to\infty} \frac{\mathrm{Sp}_{k,n}}{n^{\frac{3}{2}}}=\frac{1}{2} Z_{k},$$
where $Z_{k}$ is a positive random variable of density $g_{k}(x)$, $x>0$, given by
\begin{equation}
\label{eq_density_of_spacing_1}
 g_k(x)= x\, \frac{\partial^2}{\partial x^2}\bigl(\pr{ \Tb(k)> x}\bigr).
\end{equation}
\end{thm}
Here is an alternative definition of the spacing.

\begin{definition}[Second definition of the spacing]\label{def:second}
 Fix $n=1,2,\dots$, $k\in \{1,2,...,n-1\}$ and $a\in\mathbb Z$. Let $(s_t)_{t\in \mathbb Z}$ be the periodic extension of a uniformly random sorting network in $\Omega_n$. Set
 $$
  Y=\min\{t>a: s_t=k\}.
 $$
 We define the spacing $\widehat{\mathrm{Sp}}_{k,n}$ on the $k^\text{th}$ row of a sorting network of size $n$ as a random variable whose law is the distribution of $Y-a$ conditional on the event $\{s_a=k\}$.
\end{definition}
\begin{remark}
As in Definition \ref{def:first}, the choice of $a$ does not affect the distribution of $\widehat{\mathrm{Sp}}_{k,n}$.
\end{remark}

Both definitions of the spacing have their own merits. Definition \ref{def:second} is the one preferred in theoretical physics and random matrix literature, and it has been used implicitly in \cite{Rozinov}, which studies the scaling limit of the spacing of RSN on the $1^{st}$ row. However, a disadvantage of Definition \ref{def:second} is that it is hard to sample or observe $\widehat{\mathrm{Sp}}_{k,n}$, as it fails to be a random variable on the state space of all sorting networks; on the other hand, $\mathrm{Sp}_{k,n}$ is a function of a sorting network, and this is the definition used in \cite{GR} which studies the limiting behavior of RSN in the bulk. The $n\to\infty$ limit of $\widehat{\mathrm{Sp}}_{k,n}$ is still connected to  $\Tb(k)$ of Definition \ref{Definition_aGUE_small}, but in a slightly different way.
\begin{thm}\label{thm:derivative}
Fix $k\in \Z_{\ge 1}$. We have the following convergence in distribution:
$$\lim_{n\to\infty} \frac{\widehat{\mathrm{Sp}}_{k,n}}{n^{\frac{3}{2}}}=\frac{1}{2} \widehat{Z}_{k},$$
where $\widehat{Z}_{k}$ is a positive random variable of density $\widehat{g}_{k}(x)$, $x>0$, given by
\begin{equation}
\label{eq_density_of_spacing_2}
 \widehat{g}_k(x)= \frac{\sqrt{\pi}}{2}\frac{(2k-2)!!}{(2k-1)!!}\, \frac{\partial^2}{\partial x^2}\bigl(\pr{ \Tb(k)> x}\bigr).
\end{equation}
\end{thm}
\begin{remark}
 In $k=1$ case $\Tb(k)$ becomes the absolute value of  a Gaussian random variable,  $|N(0,\tfrac{1}{2})|$. Hence, $\widehat{Z}_{1}$ has density $2xe^{-x^{2}}$, $x>0$, which matches the result of \cite{Rozinov}.
\end{remark}

In addition, we present an alternative expression for $\widehat{Z}_{k}$ by identifying it with a marginal of a certain two-dimensional determinantal
point process; we refer to \cite{Bor} and references therein for general discussion of the determinantal point processes.
\begin{definition}\label{def:kernel}
Let $\X_{k}^{'}$ be a determinantal point process on $\mathbb Z_{\ge 2}\times \mathbb R_{\ge 0}$, such that with respect to the product of the counting measure on the first coordinate and the Lebesgue measure on the second coordinate, the correlation kernel is
\begin{align*}
K^{'}_{k}(x_1,u_1;\,x_2,u_2) &= \ind{u_2 < u_1,\, x_2 < x_1}\,\frac{(u_{2}-u_{1})^{x_{1}-x_{2}-1}}{(x_{1}-x_{2}-1)!}\\
\nonumber & + \dint{[0,\infty)}{[0,x_{1})} \frac{\Gamma(-w)}{\Gamma(z+1)}\frac{\Gamma(z+x_{2}+1)}{\Gamma(x_{1}-w)}\frac{\Gamma(-\frac{z+x_{2}}{2})}{\Gamma(\frac{-x_{1}+w+1}{2})}  \\
\nonumber &\quad \times \frac{z+x_{2}-2k}{z+x_{2}-2k+1}\frac{x_{1}-w-2k}{x_{1}-w-2k-1}\frac{u_{1}^{w}u_{2}^{z}}{w+z+x_{2}-x_{1}+1}.
\end{align*}
Here $u_{1},u_{2}\in \R_{>0}$ and $x_{1},x_{2}\in \Z_{\ge 2}$; when $u_1$ or $u_2$ is equal to $0$, the kernel is defined as the limit as $u_1$ or $u_2$ tends to 0. $C_{z}[0,\infty), C_{w}[0,x_{1})$ are counterclockwise--oriented contours which enclose the integers in $[0,\infty)$ and $[0,x_{1})$, respectively, and are arranged so that $C_z$ and $x_1-x_2-1-C_w$ are disjoint, as in Figure \ref{Figure-unboundedcontour}.
\end{definition}

\begin{remark}
For $m\ge 1$, there are $m$ particles in the $2m^\text{th}$ and $2m+1^\text{th}$ levels of $\X^{'}_{k}$ (i.e.\ with first coordinates $2m$ and $2m+1$, respectively), except that on the  $2k^\text{th}$ level there are only $k-1$ particles.

If we denote the $j^\text{th}$ particle(from top to bottom) on the $l^\text{th}$ level by $x^{l}_{j}$ and set $x^{l}_{j}=0$ for $j> \#$ particles in the $l^\text{th}$ level, then the particles of $\X^{'}_{k}$ interlace, i.e,
$$x^{l+1}_{j+1}\le x^{l}_{j}\le x^{l+1}_{j}.$$

While we do not prove this, we expect that compared to $\X$, $\X^{'}_{k}$ can be thought as the corner process of aGUE conditioned on the event that the smallest nonnegative eigenvalue of the $2k^\text{th}$ corner is 0, see Remark \ref{remark_conditional}.
\end{remark}
\begin{thm}\label{thm:correspondence} Let $\widehat{Z}_{k}$  be as in Theorem \ref{thm:derivative} and let $Z'_{l}$ denote the coordinate of the closest to the origin particle on the $l^\text{th}$ level in the point process $\X'_{k}$.
\begin{itemize}
\item If $k=1$, then the law of $\widehat Z_1$ coincides with that of $Z'_{3}$.
\item If $k\ge 2$, then the law of $\widehat Z_k$ coincides with that of $\min\{Z'_{2k-1}, Z'_{2k+1}\}$.
\end{itemize}
\end{thm}

\subsection{Methods}
Here is a sketch of our proof of Theorem \ref{thm:firstswap}:
\begin{itemize}
\item The Edelman-Greene bijection \cite{EG} maps the problem to the study of the asymptotic distribution of the entries of uniformly random standard Young tableau of staircase shape, see Section \ref{Section_EG}.
\item We replace standard Young tableau by a continuous version, which we call Poissonized Young tableau. In Section \ref{Section_SYT_to_PYT} we show that the error of this replacement is negligible in our limit regime.
\item We use the formula of \cite{GR} for the correlation kernel of the determinantal point process, describing the entries of a Poissonized Young tableau.
\item Asymptotic analysis of this formula leads in Theorems \ref{thm:def1} and \ref{thm:def2} to the limiting process described in terms of double contour integral formulas for its correlation kernel.

\item Expanding the integrals in residues, performing resummations, and using the Gibbs (conditional uniformity) properties of the point processes under consideration, we reveal in Section \ref{subsec:check} a match to the distribution of  eigenvalues of aGUE matrices.
\end{itemize}
We remark that (in contrast to the results in the bulk, i.e., for $k$ of order $\alpha n$, $0<\alpha<1$, as in \cite{GR,ADHV}) we do not claim any results for the joint asymptotic distribution of several swaps; Theorem \ref{thm:firstswap} only deals with one-dimensional marginal distribution. A technical reason is that the Edelman-Green bijection does not have any straightforward continuous version acting on the eigenvalues of aGUE of finite sizes: it seems that one needs to deal simultaneously with aGUE of all sizes, which is not a particularly transparent operation. In future, it would be interesting to find a way to overcome this difficulty.

\smallskip

Theorems \ref{thm:gap1} and \ref{thm:derivative} are proven in Section \ref{sec:correspondence} as corollaries of Theorem \ref{thm:firstswap}. The central idea is to develop discrete versions of  \eqref{eq_density_of_spacing_1}, \eqref{eq_density_of_spacing_2} valid for random sorting networks of finite sizes. In fact, these discrete statements are valid for any periodic point processes and, thus, can be of independent interest, see Proposition \ref{prop:circle}.

Finally, Theorem \ref{thm:correspondence} is proven in Section \ref{sec:correspondence} by repeating the arguments of Theorem \ref{thm:firstswap} for the sorting networks conditional on the event $\{s_1=k\}$. This requires asymptotic analysis of entries of standard Young tableau of a slightly different shape, which is still possible by our methods. Thus, we arrive at the identities of  Theorem \ref{thm:correspondence} in an indirect way, by showing that the two sides of the identity are $n\to\infty$ limits of the same discrete distributions.

\subsection*{Acknowledgements}  We thank the anonymous referee for helpful comments. The work of V.G.\ was partially supported by NSF Grants DMS-1664619, DMS-1949820, DMS-2246449, and by the Office of the Vice Chancellor for Research and Graduate Education at the University of Wisconsin--Madison with funding from the Wisconsin Alumni Research Foundation.

\section{Preliminaries}\label{sec:pre}

One of the key tools in studying sorting networks is the Edelman-Green bijection (see Section \ref{Section_EG} for the details), which maps them to Standard Young Tableaux (SYT). In this and the next section we develop asymptotic results for SYT, which will be ultimately used in Section \ref{sec:correspondence} to prove the theorems about the random sorting networks announced in the introduction.

In addition, in the last subsection of this section we recall the properties of the eigenvalues of aGUE, which will be useful later on.

\subsection{Young diagrams and Young tableaux}\label{subsec:youngtableaux}
A partition is a sequence of non-negative integers $\lambda_{1}\ge \lambda_{2}\ge...\ge 0$ such that $|\lambda|:=\sum_{i=1}^{\infty}\lambda_{i}< \infty$. The length of $\lambda$, denoted by $l(\lambda)$, is the number of strictly positive $\lambda_{i}s$.

We identify a partition $\lambda$ with a collection of $N=|\lambda|$ boxes, such that there are $\lambda_{i}$ boxes in row $i$. We use coordinate $(i,j)$ to denote the j-th box at row i. In other words, the coordinates of boxes are $\{(i,j)\mid 1\le j\le \lambda_{i},\, i=1,2,\dots\}$. Such a combinatorial object is named a \emph{Young diagram}, still denoted as $\lambda$. In particular, we say the Young diagram is of staircase shape and write $\lambda=\Delta_{n}$ for some $n\in \Z_{\ge 2}$, if $\lambda_{i}=n-i$ for $i=1,2,\dots,n$. We also use the diagram $\lambda=\Delta_{n}\setminus(n-k,k)$ for $1\le k \le n-1$, which has $\lambda_i=n-i$ for $i\ne n-k$ and $\lambda_{n-k}=k-1$.

A \emph{standard Young tableau} (SYT) of shape $\lambda$ is an insertion of numbers 1,2,...,$|\lambda|$ into boxes of $\lambda$ without repetitions, such that the numbers in each row and column strictly increase from left to right and from top to bottom. The numbers within a SYT are its \emph{entries}. Fixing the Young diagram $\lambda$, we can get a uniformly random SYT of shape $\lambda$, denoted as $\Tb_{\lambda}$ by considering a uniform measure on the space of all SYT of shape $\lambda$. See Figure \ref{Figure-tableaux} for an example with $\lambda=\Delta_{6}$.

A \emph{Poissonized Young tableau} (PYT) of shape $\lambda$ is an insertion of $|\lambda|$ real numbers in $[0,1]$ into boxes of $\lambda$, such that the numbers strictly increase along each row and column. We use $\mathrm{PYT}(\lambda)$ to denote the space of all PYT of shape $\lambda$. Note that a given PYT of shape $\lambda$ can be transformed canonically to a SYT of the same shape, by replacing the k-th smallest entry with k. In the opposite direction, we can get a uniformly random PYT of shape $\lambda$, denoted as $\T_{\lambda}$ by the following steps: first generate a uniformly random SYT of shape $\lambda$, then independently sample $|\lambda|$ i.i.d.\ random variables uniform in [0,1], and replace the entry $k$ with the k-th smallest random variable.

\begin{figure}[htpb]
    \begin{center}        \includegraphics[scale=0.55]{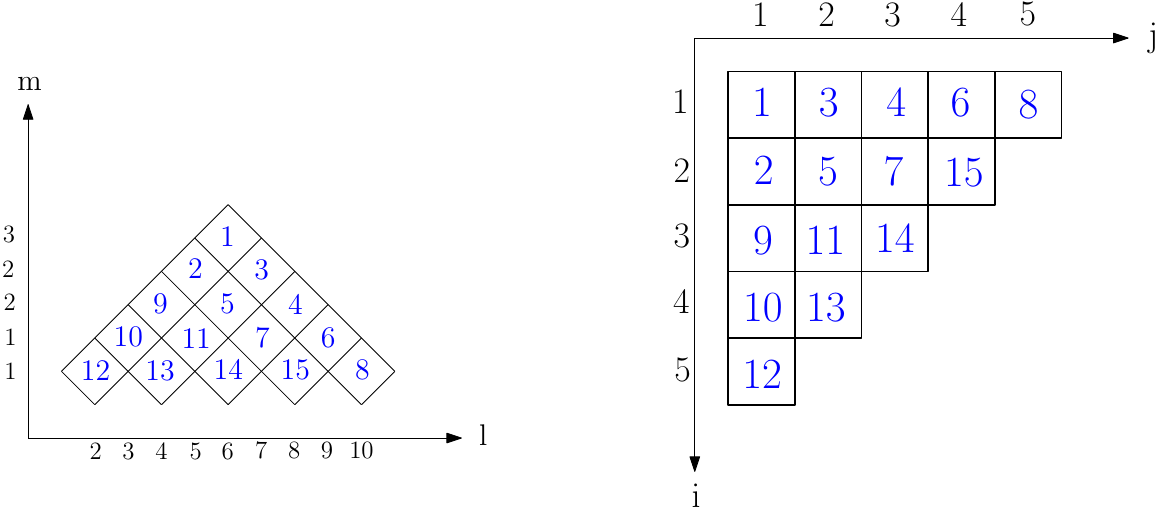}
        \caption{\small{An example of staircase shaped standard Young tableau, with $n=6$ in two different coordinate systems: the right one is defined in Section  \ref{subsec:youngtableaux} and left one is defined in Section \ref{subsec:rotation}.}\label{Figure-tableaux}}
    \end{center}
\end{figure}

\subsection{Rotation of Young Tableaux}\label{subsec:rotation}

Let $T_{\Delta_{n}}$ denote a standard Young tableau of shape $\Delta_{n}$. We make a change of coordinates $(i,j)\to(l,m)$ for the entries in $T_{\Delta_{n}}$ by letting
\begin{align*}
l=n-i+j,\qquad
m=
\begin{cases}
\frac{n-i-j}{2}+1, & n-i-j\;\; \text{is even}, \\
\frac{n-i-j+1}{2}, & n-i-j\;\; \text{is odd}.
\end{cases}
\end{align*}
We let $T_{\Delta_{n}}(l,m)$ denote the entry of $T_{\Delta_{n}}$ on the $i^\text{th}$ row and the $j^\text{th}$ column, where $i$ and $j$ are reconstructed from $l$ and $m$ by inverting the above formulas. In more detail, we have
$$
 i=\begin{cases}  n-\frac{l}{2}-m+1, & l \text{ is even},\\   n-\frac{l}{2}-m+\frac{1}{2}, & l \text{ is odd}; \end{cases} \qquad j=\begin{cases}  \frac{l}{2}-m+1, & l \text{ is even},\\   \frac{l}{2}-m+\frac{1}{2}, & l \text{ is odd}. \end{cases}
$$
 The allowed values for $(i,j)$ are: $i\ge 1$, $j\ge 1$, $i+j\le n$. The allowed values for $(l,m)$ are $l=2,3,4,\dots,2n-2$ and $m=1,2,\dots,\lfloor \frac{\min(l,2n-l)}{2}\rfloor$. The transformation $(i,j)\to (l,m)$ is essentially a clockwise rotation by  45 degrees, as can be seen in Figure \ref{Figure-tableaux}.

\bigskip

Similarly, for standard Young tableau $T_{\Delta_{n}\setminus(n-k,k)}$ we make the same change of coordinates as above and deal with $T_{\Delta_{n}\setminus(n-k,k)}(l,m)$. Formally, the entry at $(l,m)=(2k,1)$ does not exist, because it corresponds to the removed box at $(n-k,k)$, but we can also think about this entry as being $N={n\choose 2}$; in this way  $T_{\Delta_{n}\setminus(n-k,k)}$ becomes $T_{\Delta_{n}}$ with the additional restriction $T_{\Delta_{n}}(2k,1)=N$.

For Poissonized Young tableau (PYT) $\T_{\lambda}$ of shapes $\lambda=\Delta_{n}$ or $\lambda=\Delta_{n}\setminus (n-k,k)$, we define the change of coordinate in exactly the same way and use $(m,l)$ coordinate system.

\subsection{Point processes associated with PYT}\label{subsec:embd}
Let $\T_{\lambda}$ be a $\mathrm{PYT}$ of shape $\lambda$. As in Section \ref{subsec:rotation}, we will focus on the case $\lambda=\Delta_{n}$ or $\lambda=\Delta_{n}\setminus (n-k,k)$, for which we induce a point process on $\Z_{\ge 0}\times \R_{\ge 0}$ from its entries.

\begin{definition}\label{def:tableau}
The projection of a $\mathrm{PYT}$ $\T$
with shape ${\Delta_{n}}$ or $\Delta_{n}\setminus (n-k,k)$ is a point configuration on $\Z_{\ge 0}\times [0,1]$, such that there's a particle at $(l,u)$, if for some $l\in \Z_{\ge 2}$ and $m\in \Z_{\ge 1}$,
$$u=1-\T(l,m).$$
\end{definition}
By definition, $u=1-\T(l,m)$ is the $m^\text{th}$ lowest particle on level $\{l\}\times \R_{\ge 0}$ (except for level $l=2k$ in $\T_{\Delta_{n}\setminus (n-k,k)}$, where $\T(2k,1)$ is missing), and the particles on neighboring levels interlace by the setting of PYT, see Figure \ref{Figure-Projection}.

\begin{figure}[htpb]
    \begin{center}
        \includegraphics[width=0.65\linewidth]{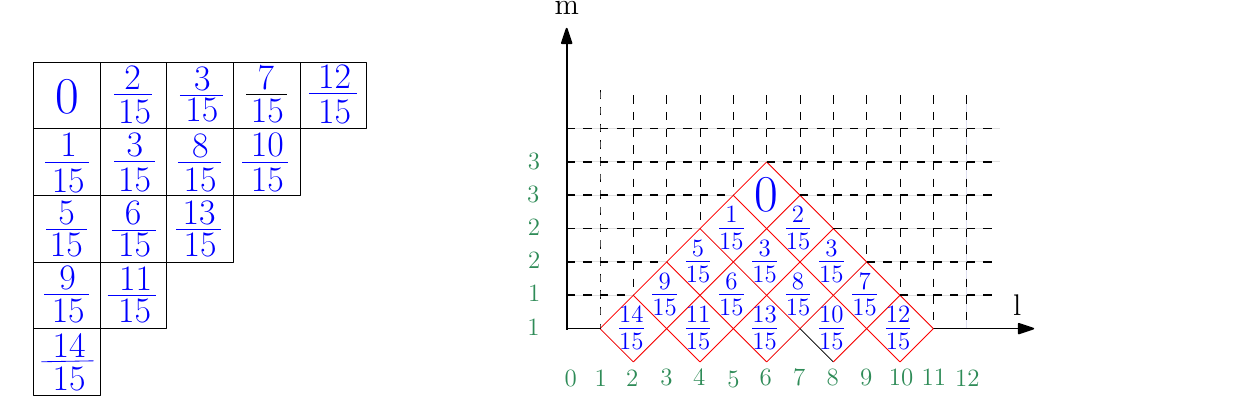}
        \includegraphics[width=0.34\linewidth]{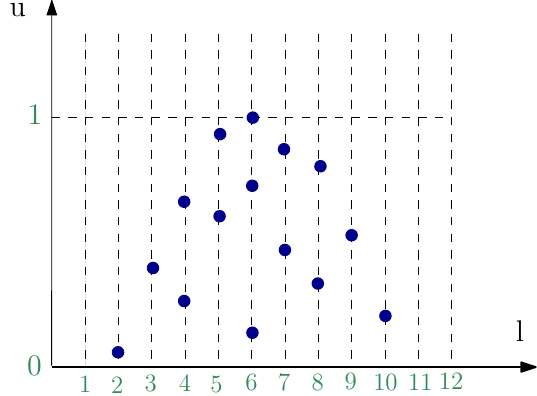}
        \caption{\small{Producing a point configuration from a $\mathrm{PYT}$ $T$ of shape $\Delta_{6}$ with entries given in the figure as in Section \ref{subsec:rotation} and \ref{subsec:embd}: first rotate $\Delta_{6}$ clockwise by 45 degrees, then project its entries into a configuration of interlacing particles. }\label{Figure-Projection}}
    \end{center}
\end{figure}

If we take $\T_{\lambda}$ to be uniformly random, then the projection of $\T_{\lambda}$ becomes a point process on $\Z_{\ge 0}\times [0,1]$. Note that this random point configuration is simple almost surely. We also would like to rescale the $u$--coordinate.
\begin{definition}\label{def:pointprocess} The point process $\X^{'}_{n}$ on $\Z_{\ge 0}\times \R_{\ge 0}$ is obtained by taking
a uniformly random $\mathrm{PYT}$ $\T_{\Delta_{n}}$ , projecting its entries on $\Z_{\ge 0}\times [0,1]$, and then rescaling the second coordinate of each particle by the map $(l,u)\mapsto (l,n^{\frac{1}{2}}\cdot u)$. Similarly, $\X^{'}_{k,n}$ is obtained from uniformly random PYT $\T_{\Delta_{n}\setminus (n-k,k)}$ by the same procedure.
\end{definition}

We  study the asymptotic behavior of $\X_{n}^{'}$ and $\X^{'}_{k,n}$ in Section \ref{subsec:limit}. Our analysis uses the technique of determinantal point processes with correlation kernels given by double contour integrals, which we present next.

\subsection{Determinantal Point Process}\label{subsec:dpp}
Here's a brief review of some standard definitions, for a thorough
introduction see \cite{Bor, DV}. Let $S$ be a locally compact Polish space, say $\Z$ or $\R^{n}$. We consider the space of discrete subsets in $S$, which consists of countable subsets $X$ of $S$ without accumulation points, and identify each such point configuration with the measure $\sum_{x\in X}\delta_{x}$. The topology of this space is given as the weak topology of Radon measures on $S$. We further use Borel $\sigma-$algebra corresponding to this topology.

A point process is a random discrete subset of $S$, and we assume that it is simple a.s, i.e, all the points have distinct positions. A \emph{determinantal point process} $\X$ on $S$ is a point process with correlation kernel $K:S\times S\rightarrow \R$, and a reference measure $\mu$ on $S$, such that for any $f: S^{k}\rightarrow \R$ with compact support,
\begin{equation} \label{eqn:dpp}
\mathbb{E} \,\Bigg [\sum_{\substack{x_1,\ldots, x_k \in \X \\ \,x_1, \ldots, x_k \; \text{are distinct}}} f(x_1, \ldots, x_k)\Bigg] =
\int \limits_{S^k} \det \left [ K(x_i,x_j) \right] f(x_1,\ldots,x_k) \,\mu^{\otimes k}(dx_1,\ldots,dx_k).
\end{equation}
Expectations of the form given by the l.h.s.~of (\ref{eqn:dpp}) determine the law of $\X$ under mild
conditions on $K$, see \cite{Lenard}. This will always be the case in this text as the correlation kernels we
consider will be continuous.

The determinantal point processes appearing in this paper live in space $S=\Z\times [0,1]$ or $S=\Z\times \R_{\ge 0}$, with reference measure being the product of counting measure and Lebesgue measure, denoted by $\#_{\Z}\times \Lb([0,1])$ or $\#_{\Z}\times \Lb(\R_{\ge 0})$ resp. We use the following lemma, whose proof we omit, see \cite{DV}, \cite{Lenard}.
\begin{lemma}\label{lemma:conv}
Let $Y_{n}$ be a determinantal point process on $\Z\times \mathbb R_{>0}$ with reference measure $\#_{\Z}\times \Lb(\mathbb R_{>0})$ and correlation kernel $K_{n}$. Then the sequence $Y_{n}$ converges weakly as $n\to\infty$ to a determinantal point process $\X$ with correlation kernel $K$ on $\Z\times \R_{\ge 0}$ (with reference measure $\#_{\Z}\times \R_{\ge 0}$) and with almost surely no particles at $(i,0)$, $i\in\mathbb Z$, if

  $K_{n}\rightarrow K$ uniformly on compact subsets of $\Z\times \R_{\ge 0}$.
\end{lemma}

\subsection{Uniformly Random Projection}\label{subsec:jumps}

When $\mathrm{PYT}$ $\T_{\Delta_{n}}$ and $\T_{\Delta_{n}\setminus (n-k,k)}$ are uniformly random, their projections are point processes on $\Z_{\ge 0}\times [0,1]$, whose correlation functions were computed in \cite{GR}. We restate the result there for our case in the following theorem, which is a stepping stone of the proofs of our main results.

\begin{thm}\label{thm:jumps} The processes $\X^{'}_{n}$ and $\X^{'}_{k,n}$ of Definition \ref{def:pointprocess} are determinantal point process on $\{2,3,\dots,2n-2\} \times \mathbb R_{>0}$
with correlation kernel $K_{\lm}(x_1,u_1; x_2,u_2)$ given for $x_1,x_2 \in {\{2,3,\dots,2n-2\}}$ and $u_1, u_2 \in \mathbb R_{>0}$, by
\begin{align}
\label{eq_x1} &K_{\lm}(x_1,u_1; x_2;u_2) = \ind{u_{2}< u_{1},x_{2}< x_{1}}\frac{(u_{2}-u_{1})^{x_{1}-x_{2}-1}n^{-\frac{1}{2}(x_{1}-x_{2})}}{(x_{1}-x_{2}-1)!}\\
& +\frac{1}{(2\pi i)^{2}}\oint_{C_{z}[0,\lambda_1+n-x_{2})}dz\oint_{C_{w}[0,x_{1})}dw \frac{\Gamma(-w)}{\Gamma(z+1)}\frac{G_{\lambda}(z+x_{2}-n)}{G_{\lambda}(x_{1}-n-1-w)}\cdot \frac{u_{2}^{z}\,u_{1}^{w}\, n^{-\frac{1}{2}(z+w+1) }}{w+z+x_{2}-x_{1}+1},\notag
\end{align}
where $$G_{\lm}(u) = \G{u+1} \prod_{i=1}^{\infty}
\frac{u+i}{u-\lambda_i+i}= \frac{\G{u+1+n}}{\prod\limits_{i=1}^n (u-\lm_i +i)},$$
with $\lambda=\Delta_n$ for $\X^{'}_{n}$
 and $\lambda=\Delta_{n}\setminus (n-k,k)$ for $\X^{'}_{k,n}$.
The contours $C_z[0,\lm_1+n-x_2)$ and $C_w[0,x_1)$ are as shown in Figure
\ref{fig:contour1}. Both are counter-clockwise, encloses only the integers in the
respective half open intervals $[0,\lm_1+n-x_2)$ and $[0,x_1)$, and are arranged so that $C_z$ and $x_1-x_2-1-C_w$ are disjoint, as in Figure \ref{fig:contour1}.
\end{thm}

\begin{figure}[t]
\begin{center}
\begin{tikzpicture}
\draw[help lines,->] (-2,0) -- (9,0) coordinate (xaxis); \draw[help lines,->]
(0,-2.5) -- (0,2.5) coordinate (yaxis);

\draw [decoration={markings, mark=at position 4.5cm with {\arrow[line
width=0.75pt]{<}}, mark=at position 14cm with {\arrow[line width=0.75pt]{<}}},
postaction={decorate}] (8.5,-2) -- (0.7,-2) -- (-0.67,0) -- (0.7,2) -- (8.5,2)
node[above left] {$z$-contour $C_z$} -- (8.5,-2);

\draw [decoration={markings, mark=at position 3cm with {\arrow[line
width=0.75pt]{<}}, mark=at position 8.5cm with {\arrow[line width=0.75pt]{<}}},
postaction={decorate}] (5,-1) -- (0.7,-1) -- (-0.33,0) -- (0.7,1) -- (5,1)
node[above left] {$w$-contour $C_w$} -- (5,-1);

\draw [black,fill=black] (0,0) circle (1pt) node[below right] {$0$}; \draw
[black,fill=black] (-1,0) circle (1pt) node[below left] {$-1$}; \draw
[black,fill=black] (4.7,0) circle (1pt) node[below left] {$x_1-1$}; \draw
[black,fill=black] (8.0,0) circle (1pt) node[below left] {$\lambda_1+n-1-x_2$};
\end{tikzpicture}

\bigskip

\includegraphics[scale=0.7]{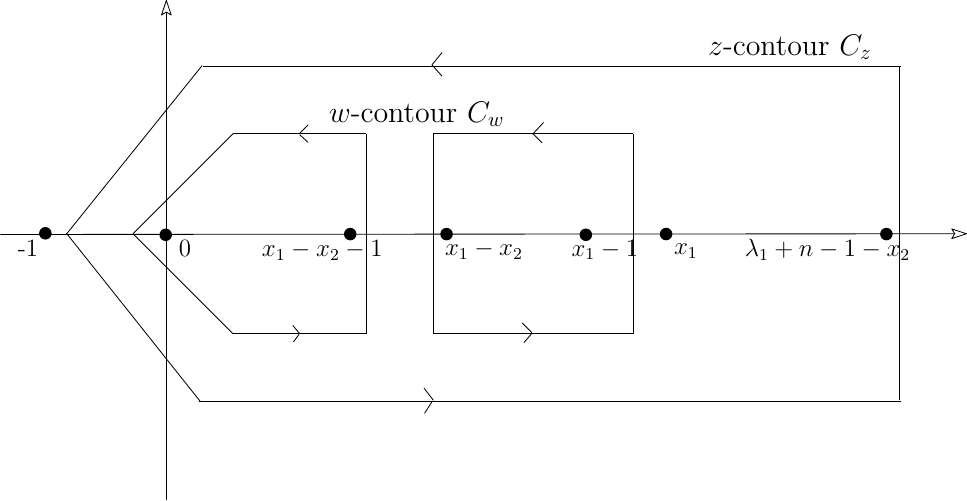}

\caption{\small{The contours in the statement of Theorem \ref{thm:jumps},  correspond to the case $2\le x_1\le x_2$ (top) and $2\le x_2< x_1$ (bottom). In the bottom picture, $w$-contour splits into two components, in order to guarantee that $C_z$ and $x_1-x_2-1-C_w$ are disjoint}}
\label{fig:contour1}
\end{center}
\end{figure}

\begin{remark} \label{Remark_integrand_simplification}
 The $\Gamma$--functions in double-contour integrals are used for the convenience of notations, but the integral is, in fact, a rational function of $z$ and $w$ multiplied by $u_{2}^{z}\,u_{1}^{w}\, n^{-\frac{1}{2}(z+w+1)}$. Indeed, we have
 \begin{equation}
 \label{eq_x2}
  \frac{\Gamma(-w)}{\Gamma(z+1)}\frac{G_{\lambda}(z+x_{2}-n)}{G_{\lambda}(x_{1}-n-1-w)}= \frac{(z+1)(z+2)\cdots(z+x_2)}{\prod\limits_{i=1}^n (z+x_{2}-n-\lm_i +i)} \cdot  \frac{\prod\limits_{i=1}^n (-w+x_{1}-n-1-\lm_i +i)}{(-w)(1-w)\cdots(x_1-1-w)}.
 \end{equation}
\end{remark}

\begin{proof}[Proof of Theorem \ref{thm:jumps}]
This is a corollary of \cite[Theorem 1.5]{GR}. In the notation of \cite{GR}, we choose:
$$t_1=1-\frac{u_1}{n^{\frac{1}{2}}};\ t_2=1-\frac{u_2}{n^{\frac{1}{2}}};\ x_1^{[GR]}=x_1-n;\ x_2^{[GR]}=x_2-n;\qquad \lm=\Delta_{n}\;\text{or}\; \Delta_{n}\setminus (n-k,k).$$
Since our match of parameters involves rescaling of the real spatial coordinate, we also need to use the following Lemma \ref{lemma:rescaling}, whose proof we omit.
\end{proof}

\begin{lemma}\label{lemma:rescaling}
Let $Y_{n}$ be a determinantal point process on $\Z\times (0,1)$ with reference measure $\#_{\Z}\times \Lb((0,1))$ and correlation kernel $K_{n}$. For $c_{n}\in \Z$, $\alpha\in \R$, we define the rescaled and shifted point process
$$Y^{'}_{n}=\{(x-c_{n},n^{\alpha}u)\mid (x,u)\in Y_{n}\}.$$
Then, the correlation kernel of $Y^{'}_{n}$ with reference measure $\#_{\Z}\times \Lb(\R_{> 0})$ is $$K^{'}_{n}(x_{1},u_{1};\, x_{2},u_{2})=n^{-\alpha}K_{n}\Bigg(x_{1}+c_{n},\frac{u_{1}}{n^{\alpha}};\, x_{2}+c_{n},\frac{u_{2}}{n^{\alpha}}\Bigg).$$
\end{lemma}

\begin{remark}\label{rem:unifconvergence}
We can extend $\X_{n}^{'}$ and $\X_{k,n}^{'}$ to point processes on $ {\{2,3,\dots,2n-2\}}\times \R_{\ge 0}$, in such a way that almost surely there is no particle at $(x,0)$ for any $x\in  {\{2,3,\dots,2n-2\}}$. Moreover, the kernel $K_{\lambda}$ can be extended to the same space based on its regular behavior at $u_1$ or $u_2=0$.

In order to see that the double contour integral in the definition of the kernel is well-behaved as $u_1\to 0$ or $u_2\to 0$, we expand the integral as a sum of residues at the poles inside the integration contours. We start from the case $2\le x_1 \le x_2$ with contours of the top panel of Figure \ref{fig:contour1}. In this case the denominator $w+z+x_2-x_1+1$ is never singular inside the integration contours, the $w$--residues are at simple poles at finitely many non-negative integers and $z$--residues are also at simple poles at finitely many non-negative integers. The residue at $w=m_1$, $z=m_2$ for $m_1,m_2\in\mathbb Z_{\ge 0}$ has the form $c(m_1,m_2)\cdot u_1^{m_1} u_2^{m_2}$, where $c(m_1,m_2)$ does not depend on $u_1,u_2$. Hence, the residue is continuous as $u_1, u_2\to 0$ and so is the double integral.

Proceeding to the case $2\le x_2< x_1$ with contours of the bottom panel of Figure \ref{fig:contour1}, the additional feature is the potential singularity of the denominator $w+z+ x_2 -x_1+1$. Let us first compute the $w$--integral as a sum of the residues, getting a sum of the terms of the form \mbox{$u_1^{m_1}\times$ (one-dimensional $z$-integral)} with non-negative values for $m_1$. Up to the factors not depending on $u_1$ or $u_2$, the corresponding $z$--integral has the form
$$
  \oint_{C_{z}[0,\lambda_1+n-x_{2})} \frac{\Gamma(z+x_2+1)}{\Gamma(z+1)}\cdot \frac{1}{\prod\limits_{i=1}^n(z+x_2-\lambda_i+i-n)} \cdot \frac{u_{2}^{z} \, n^{-\frac{1}{2}z }}{z+m_1+x_{2}-x_{1}+1} dz.
$$
The simple poles of the last integral lead to $u_2$--dependence of the form $u_2^{m_2}$, $m_2\ge 0$, which is again continuous at $u_2=0$. It would be more complicated, if the integral had a double pole: the residue at such a pole would have involved the $z$--derivative of $u_2^z$, which leads to the appearance of $\ln(u_2)$ factor that is singular at $u_2=0$. However, we claim that there is no double pole. Indeed, using \eqref{eq_x2}, $m_1\notin \{x_1-n-1-\lambda_i+i\}_{i=1}^n$. Therefore, for the pole of $\frac{1}{z+m_1+x_{2}-x_{1}+1}$, we have $(-m_1-x_2+x_1-1)\notin \{-x_2+\lambda_i-i+n\}_{i=1}^n$. Hence, this pole is outside the set of poles of $\frac{1}{\prod_{i=1}^n(z+x_2-\lambda_i+i-n)}$.
\end{remark}

\subsection{Anti-symmetric GUE}\label{subsec:ague}  Let us recall a statement from \cite[Theorems 2.9 and 3.3]{FN}:

\begin{prop} For $M$ in Definition \ref{def:cornerprocess} and $l=1,2,\dots$ we have:\label{prop:aGUE}

\begin{enumerate}[(a)]
\item The $l^\text{th}$ corner of $M$ has $l$ real eigenvalues.

\item The eigenvalues come in pairs, i.e, $\lambda$ is an eigenvalue of the $l^\text{th}$ corner if and only if $-\lambda$ is an eigenvalue of the $l^\text{th}$ corner. When $l$ is odd, $0$ is necessarily an eigenvalue of the $l^{\text{th}}$ corner.

\item The eigenvalues of the $l^\text{th}$ corner interlace with the eigenvalues of the $(l+1)^\text{st}$ corner a.s, which means that
\begin{equation}
\lambda^{l+1}_{j+1}\le \lambda^{l}_{j}\le \lambda^{l+1}_{j},
\label{eq_x3}
\end{equation}
where $j=1,2,\dots,l$ and $\lambda^{l}_{j}$ denotes the $j^\text{th}$ largest eigenvalue of the $l^\text{th}$ corner.

\item \label{eq_aGUE_cond} Conditionally on the positive eigenvalues of the $l^\text{th}$ corner, the positive eigenvalues of the $i^\text{th}$ corners, $i=2,...,l-1$, are jointly distributed uniformly on the polytope determined by the interlacing inequalities \eqref{eq_x3}.

\item  The joint density of positive eigenvalues $\bigl(\lambda^{l}_{1}\ge \dots\ge \lambda^{l}_{\lfloor\frac{l}{2}\rfloor}\bigr)$ of the $l^\text{th}$ corner  is
\begin{align*}
&p(\lambda^{l}_{1},...,\lambda^{l}_{\lfloor \frac{l}{2}\rfloor})=
\begin{dcases}
C_{l} \prod_{1\le i< j \le \frac{l}{2}} \left( (\lambda^{l}_i)^2-(\lambda^l_j)^2\right)\, \prod_{j=1}^{\frac{l}{2}}e^{-(\lambda^{l}_{j})^{2}}, & \text{l}\;\; \text{is}\;\;\text{even}; \\
\quad\\
C_{l}\prod_{1\le i< j \le \frac{l-1}{2}} \left( (\lambda^{l}_i)^2-(\lambda^l_j)^2\right)\, \prod_{j=1}^{\frac{l-1}{2}}\left[\lambda^{l}_{j} \, e^{-(\lambda^{l}_{j})^{2}}\right], & \text{l}\;\; \text{is}\;\;\text{odd},
\end{dcases}
\end{align*}
where $C_{l}$ is an explicitly known normalization constant.
\end{enumerate}

\end{prop}

Note that the above five properties uniquely characterize the joint distribution of $\{\lambda^{k}_{j}\}_{1\le j\le k< \infty}$. An alternative characterization is through the correlation functions of the corresponding point process  $\X$:

\begin{thm}[{\cite[Proposition 4.3]{FN}}]\label{thm:ague}
The aGUE corners process $\X$ of Definition \ref{def:cornerprocess} is a determinantal point process with correlation kernel $K(x,u;y,v)$  (with respect to the reference measure $\#_{\Z} \otimes \Lb(\R_{\ge 0})$), such that for $x\ge y$,
\begin{equation}\label{eq_Cor_extended}K(x,u;y,v)=e^{-u^{2}}\sum_{l=1}^{[y/2]}\frac{H_{x-2l}(u)H_{y-2l}(v)}{N_{y-2l}},\end{equation}
and for $x< y$,
\begin{equation} K(x,u;y,v)=-e^{-u^{2}}\sum_{l=-\infty}^{0}\frac{H_{x-2l}(u)H_{y-2l}(v)}{N_{y-2l}}.\end{equation}
Here $H_{j}(u)$ is the $j^\text{th}$ ``physicist's Hermite polynomial'' with leading coefficient $2^j$, so that
$$\int_{\R}H_{i}(u)H_{j}(u)e^{-u^{2}}du=2 \delta_{ij}N_j, \qquad N_j=j!2^{j-1}\sqrt{\pi}.$$
\end{thm}
\begin{remark}
 The $x=y$ case of \eqref{eq_Cor_extended} differs from the kernel of \eqref{eq_Cor_intro} by the factor $e^{-\tfrac12 u^2+\tfrac12 v^2}$, which cancels out when we compute determinants.
\end{remark}

\section{Local Limit of Standard Staircase Shaped Tableaux} \label{sec:calculate}

The aim of this section is to investigate the scaling limit near the bottom-left corner for the uniformly random standard Young tableaux of shapes $\Delta_n$ and $\Delta_{n}\setminus(n-k,k)$ as $n\to\infty$.

\subsection{Statement of the result} We use the $(l,m)$ coordinate system for SYT, as in Section \ref{subsec:rotation}. We recall that $\X$ is the point process corresponding to the corners of aGUE as in Definition \ref{def:cornerprocess}.  Let $\X(l,m)$  denote the $m^\text{th}$ particle of $\X$ in the level $l$, sorted in the increasing order; in other words, $\X(l,m)$ is the $m^\text{th}$ positive eigenvalue of $l\times l$ corner of the infinite aGUE matrix $M$ (with $m=1$ corresponding to the smallest positive and $m=\lfloor \tfrac{l}{2}\rfloor$ to the largest positive eigenvalue).

Further, we set $\X^{'}_{k}$ to be the point process of Definition \ref{def:kernel} with an extra particle inserted at $(2k,0)$. We let $\X^{'}_{k}(l,m)$ denote the $m^\text{th}$ particle of $\X^{'}_{k}$ in the level $x=l$, sorted in the increasing order.

\begin{thm}\label{thm:syt} For each $n\in\mathbb Z_{\ge 2}$, let $T_{\Delta_{n}}(l,m)$ and $T_{\Delta_{n}\setminus(n-k,k)}(l,m)$ be uniformly random standard Young tableaux of shapes $\Delta_n$ and ${\Delta_{n}\setminus(n-k,k)}$, respectively, in the coordinate system of Section \ref{subsec:rotation}. Recall $N={n\choose 2}$. Then for each fixed $k$, as $n\to\infty$ we have
\begin{equation}
\left\{n^{\frac{1}{2}}\left(1-\frac{T_{\Delta_{n}}(l,m)}{N}\right)\right\}_{l\ge 2,\,\, 1\le m \le\lfloor \frac{l}{2}\rfloor }\longrightarrow \bigl\{\X(l,m)\bigr\}_{l\ge 2,\,\, 1\le m \le\lfloor\frac{l}{2}\rfloor}\;
 \qquad \text{ and }
 \label{eq_limit_1}
\end{equation}
\begin{equation}
\left\{n^{\frac{1}{2}}\left(1-\frac{T_{\Delta_{n}\setminus(n-k,k)}(l,m)}{N}\right)\right\}_{l\ge 2,\,\, 1\le m \le\lfloor\frac{l}{2}\rfloor}\longrightarrow \bigl\{\X^{'}_{k}(l,m)\bigr\}_{l\ge 2,\,\, 1\le m \le\lfloor \frac{l}{2}\rfloor},
\label{eq_limit_2}
\end{equation}
in the sense of convergence of finite-dimensional distributions.
\end{thm}
\begin{remark} \label{remark_conditional}
 The SYT $T_{\Delta_{n}\setminus(n-k,k)}$ can be thought as $T_{\Delta_{n}}$ conditioned on its largest entry being at $(n-k,k)$, and in \eqref{eq_limit_2}, $T_{\Delta_{n}\setminus(n-k,k)}(2k,0)$ is set to be $N$, corresponding to $\X(2k,1)=0$ on the right. Hence, Theorem \ref{thm:syt} suggests a conjecture: $\X^{'}_{k}$ should have the same law as $\X$ conditioned on having a particle at $(2k,0)$, i.e.\ on $\X(2k,1)=0$. Note that additional efforts are needed to prove this, because the topology of the convergence in Theorem \ref{thm:syt} is not strong enough to make conclusions about the conditional distributions.
\end{remark}

The rest of this section is devoted to the proof of Theorem \ref{thm:syt}. We first couple the two types of SYT with the uniformly random PYT of the same shape, and induce two point processes $\X_{n}^{'}, \X_{k,n}^{'}$ on $\Z_{\ge 0}\times \R_{\ge 0}$ respectively from these two PYT, as in Definition \ref{def:pointprocess}. Then we prove that $\X_{n}^{'}\rightarrow \X^{'}$ and $\X_{k,n}^{'}\rightarrow \X^{'}_{k}$ as $n\to\infty$, where $\X^{'}$ and $\X^{'}_{k}$ are determinantal point processes on $\Z_{\ge 0}\times \R_{\ge 0}$ with explicit correlation kernels given by double contour integrals. Next, we identify $\X^{'}$ with $\X$, the corners process of aGUE of Definition \ref{def:cornerprocess}.

\subsection{Coupling with PYT}\label{Section_SYT_to_PYT}
The first step of the proof is to introduce a coupling between SYT and PYT of the same shape and to show that under this coupling the difference between random SYT and PYT is negligible in the asymptotic regime of our interest.

Let $T_{\Delta_n}$ and $\T_{\Delta_{n}}$ be uniformly random SYT and PYT, respectively, of shape $\Delta_n$. We couple these two tableaux in the following way: for $N={n\choose 2}$, let $(P_{1}\ < \ P_{2}\ <\cdots<\ P_{N})$
be a uniformly random point on the simplex $\{(x_{1},\cdots,x_{N})\mid 0< x_1 < x_2< \dots < x_N < 1\}$. Given a uniformly random SYT $T_{\Delta_{n}}$, we replace entry $l$ in $T_{\Delta_{n}}$ by $P_{l}$ for $l=1,2,\cdots ,N$; it is straightforward to check that the result is a uniformly random PYT $\T_{\Delta_{n}}$. In the opposite direction,  $T_{\Delta_{n}}$ is reconstructed by $\T_{\Delta_{n}}$ by replacing the $l$th largest entry in $\T_{\Delta_{n}}$ by $l$ for each $l=1,2,\dots,N$.

\begin{lemma}\label{lemma:coupling}
Fix any pair of integers $(i,j)$ with $i\ge 2$ and $1\le j \le \lfloor \tfrac{i}2\rfloor$.
Using the coupling described above, we have for each $\delta> 0$
$$n^{1-\delta}\left|\T_{\Delta_{n}}(i,j)-\frac{T_{\Delta_{n}}(i,j)}{N}\right|\rightarrow 0$$ in probability, as $n\rightarrow \infty$.
\end{lemma}

\begin{proof}
Let $T_{\Delta_{n}}(i,j)=\xi\in \{1,2,\dots,N\}$, where $\xi$ is a random variable. Then, by the construction of the  coupling, $\T_{\Delta_{n}}(i,j)=P_{\xi}$.
By Chebyshev's inequality,
\begin{equation}
\begin{split}
& \quad \pr{n^{1-\delta}\left|P_{\xi}-\frac{\xi}{N}\right|> \epsilon}\\
& \le \left(\frac{n^{1-\delta}}{\epsilon}\right)^{2}\E{\left(P_{\xi}-\frac{\xi}{N}\right)^{2}} \\
 & = \frac{n^{2-2\delta}}{\epsilon^{2}}\sum_{j=1}^{N}\E{\left(P_{\xi}-\frac{\xi}{N}\right)^{2}\, \Big| \, \xi=l}\cdot \pr{\xi=l}
\end{split}
\label{eq_coupling_1}
\end{equation}
For a fixed $l$, the random variable $P_{l}$ is distributed according to Beta distribution with parameters $l$ and $N+1-l$, which has mean $\frac{l}{N+1}$
and variance (see, e.g., \cite[Chapter 25]{JKB}).
\begin{equation}
\E{\left(P_{l}-\frac{l}{N+1}\right)^{2}}=\frac{l(N+1-l)}{(N+1)^{2}(N+2)}\le \frac{\left(\frac{N+1}{2}\right)^{2}}{(N+1)^2(N+2)}=\frac{1}{4(N+2)}.
\label{eq_coupling_2}
\end{equation}
Thus,
\begin{equation}
\E{\left(P_{l}-\frac{l}{N}\right)^{2}}=\E{\left(P_{l}-\frac{l}{N+1}\right)^{2}}+\left(\frac{l}{N(N+1)}\right)^2 \le \frac{1}{4(N+2)}+\frac{1}{(N+1)^2}\le \frac{1}{N}.
\label{eq_coupling_3}
\end{equation}
Since $N=\frac{n(n-1)}{2}$, \eqref{eq_coupling_1} and \eqref{eq_coupling_3} imply that as $n\rightarrow \infty$
$$\pr{n^{1-\delta}\left|P_{\xi}-\frac{\xi}{N}\right|> \epsilon}\le \frac{n^{2-2\delta}}{\eps^2 N} \rightarrow 0. \qedhere$$
\end{proof}

In a similar way, we couple uniformly random SYT $T_{\Delta_{n}\setminus(n-k,k)}$ and PYT $\T_{\Delta_{n}\setminus(n-k,k)}$  of shape $\Delta_n\setminus(n-k,k)$: we let $(P_{1}\ < \ P_{2}\ <\cdots<\ P_{N-1})$
be a uniformly random point on the simplex $\{(x_{1},\cdots,x_{N-1})\mid 0< x_1 < x_2< \dots < x_{N-1} < 1\}$. Given a uniformly random SYT $T_{\Delta_{n}\setminus(n-k,k)}$, we replace entry $k$ in $T_{\Delta_{n}\setminus(n-k,k)}$ by $P_{k}$ for $k=1,2,\cdots ,N-1$; it is straightforward to check that the result is a uniformly random PYT $\T_{\Delta_{n}\setminus(n-k,k)}$. Repeating the proof of Lemma \ref{lemma:coupling} we arrive at:
\begin{lemma}\label{lemma:coupling_2}
Fix any pair of integers $(i,j)$ with $i\ge 2$ and $1\le j \le \lfloor \tfrac{i}2\rfloor$.
Using the coupling described above, we have for each $\delta> 0$
$$n^{1-\delta}\left|\T_{\Delta_{n}\setminus(n-k,k)}(i,j)-\frac{T_{\Delta_{n}\setminus(n-k,k)}(i,j)}{N}\right|\rightarrow 0$$ in probability, as $n\rightarrow \infty$.
\end{lemma}

\subsection{Limiting Processes}\label{subsec:limit} In this section we analyze asymptotic behavior of the point processes associated to random PYT
$\T_{\Delta_{n}}$ and $\T_{\Delta_{n}\setminus(n-k,k)}$ by the procedure of Section \ref{subsec:embd}.

\begin{thm}\label{thm:def1}
The point process $\X^{'}_{n}$ (corresponding to random PYT $\T_{\Delta_{n}}$ via Definition \ref{def:pointprocess}) converges weakly as $n\to\infty$ to a point process $\X^{'}$ on $\Z_{\ge 2}\times \R_{\ge 0}$.
$\X^{'}$ is a determinantal point process (with respect to reference measure $\#_{\Z_{\ge 2}}\otimes \Lb(\R_{\ge 0})$) with correlation kernel
\begin{align*}
 K^{'}&(x_{1},u_{1};x_{2},u_{2})= I_{\{u_{2}< u_{1},x_{2}< x_{1}\}}\frac{(u_{2}-u_{1})^{x_{1}-x_{2}-1}}{(x_{1}-x_{2}-1)!} \\
&+\frac{1}{(2\pi i)^{2}}\oint_{C_{z}[0,\infty)}dz\oint_{C_{w}[0,x_{1})}dw \frac{\Gamma(-w)}{\Gamma(z+1)}\frac{\Gamma(z+x_{2}+1)}{\Gamma(x_{1}-w)}\frac{\Gamma(-\frac{z+x_{2}}{2})}{\Gamma(\frac{-x_{1}+w+1}{2})}\frac{u_{1}^{w}u_{2}^{z}}{w+z+x_{2}-x_{1}+1}.
\end{align*}
Here $u_{1},u_{2}\in \R_{>0}$ and $x_{1},x_{2}\in \Z_{\ge 2}$; when $u_1$ or $u_2$ is equal to $0$, the kernel is defined as the limit as $u_1$ or $u_2$ tends to 0. $C_{z}[0,\infty), C_{w}[0,x_{1})$ are counterclockwise--oriented contours which enclose the integers in $[0,\infty)$ and $[0,x_{1})$, respectively, and are arranged so that $C_z$ and $x_1-x_2-1-C_w$ are disjoint, as in Figure \ref{Figure-unboundedcontour}.
\end{thm}

\begin{figure}[htpb]
    \begin{center}
        \includegraphics[scale=0.6]{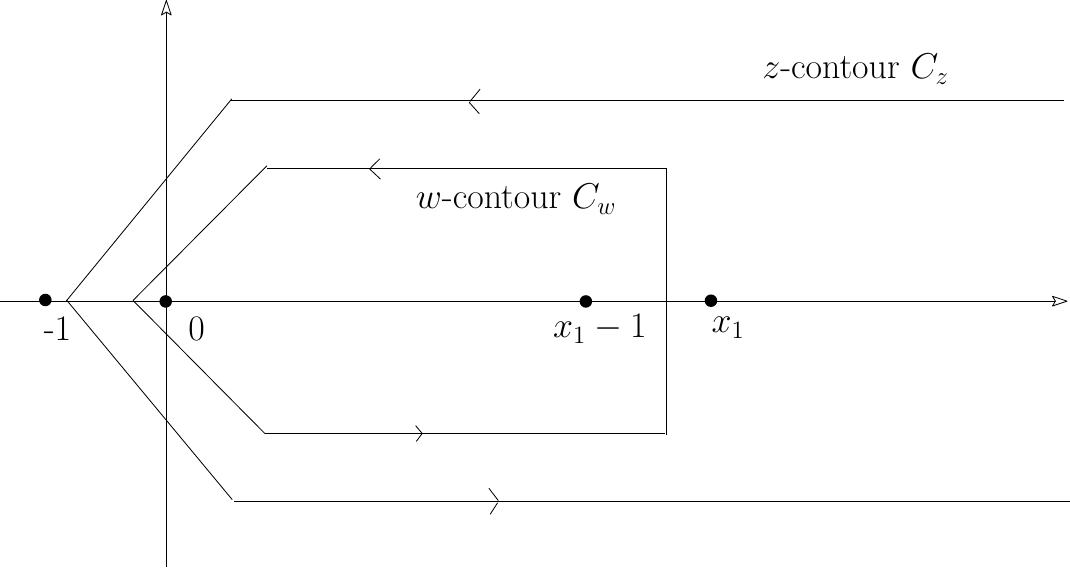}

        \medskip

        \includegraphics[scale=0.6]{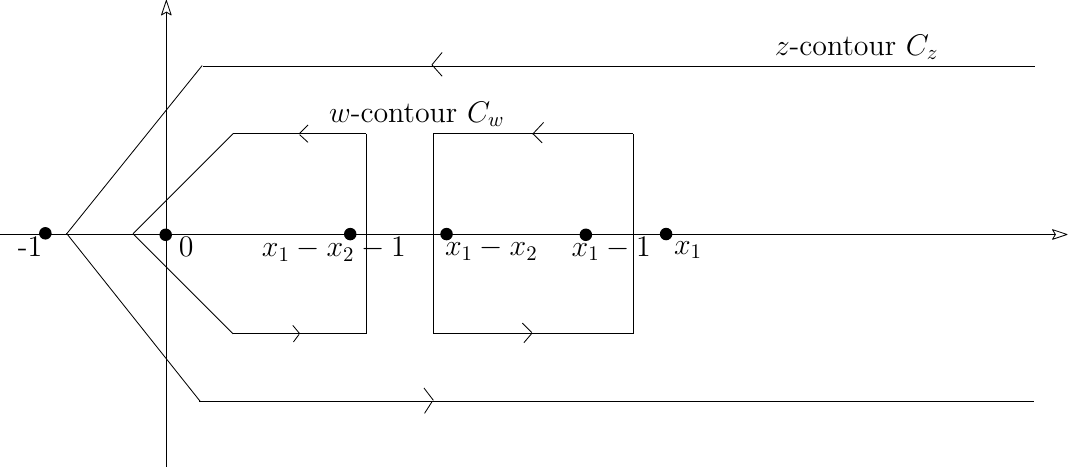}
        \caption{\small{The contours in the statements of Theorem \ref{thm:def1} and \ref{thm:def2}, for the case $1\le x_1\le x_2$ and $1\le x_2<x_1$ respectively. }\label{Figure-unboundedcontour}}
    \end{center}
\end{figure}

\begin{thm} \label{thm:def2}
The point process $\X^{'}_{k,n}$  (corresponding to random PYT $\T_{\Delta_{n}\setminus (n-k,k)}$ via Definition \ref{def:pointprocess}) converges weakly to a point process $\X^{'}_{k}$ on $\Z_{\ge 2}\times \R_{\ge 0}$.
$\X^{'}_{k}$ is a determinantal point process (with respect to reference measure $\#_{\Z_{\ge 2}}\otimes \Lb(\R_{\ge 0})$) with correlation kernel given by
\begin{align*}
K^{'}_{k}&(x_{1},u_{1};x_{2},u_{2})=  I_{\{u_{2}< u_{1},x_{2}< x_{1}\}}\frac{(u_{2}-u_{1})^{x_{1}-x_{2}-1}}{(x_{1}-x_{2}-1)!} \\
&+ \frac{1}{(2\pi i)^{2}}\oint_{C_{z}[0,\infty)}dz\oint_{C_{w}[0,x_{1})}dw \frac{\Gamma(-w)}{\Gamma(z+1)}\frac{\Gamma(z+x_{2}+1)}{\Gamma(x_{1}-w)}\frac{\Gamma(-\frac{z+x_{2}}{2})}{\Gamma(\frac{-x_{1}+w+1}{2})}\\ &\quad \times \frac{z+x_{2}-2k}{z+x_{2}-2k+1}\cdot \frac{x_{1}-w-2k}{x_{1}-w-2k-1}\cdot \frac{u_{1}^{w}u_{2}^{z}}{w+z+x_{2}-x_{1}+1}.
\end{align*}
Here $u_{1},u_{2}\in \R_{>0}$ and $x_{1},x_{2}\in \Z_{\ge 0}$; the value of $K^{'}_{k}$ when $u_1$ or $u_2$ equals 0 is to be understood as the limit as $u_1$ or $u_2$ tends to 0. The contours $C_{z}[0,\infty)$ and $C_{w}[0,x_{1})$ are the same as in Theorem \ref{thm:def1}.
\end{thm}

In the proof we use a known asymptotic property of the Gamma-function:
\begin{equation} \label{eq_Gamma_assy}
 \frac{\Gamma(y+m)}{(m-1)!}=m^{y}(1+O(m^{-1})), \qquad m\to + \infty,
\end{equation}
where $O(m^{-1})$ term is uniform as long as $y$ is uniformly bounded.





\begin{proof}[Proof of Theorem \ref{thm:def1}]

We show the convergence in distribution by verifying conditions of Lemma \ref{lemma:conv}, namely, we show that the correlation kernel $K^{'}_{n}(x_{1},u_{1};x_{2},u_{2}) n^{\frac{x_{1}-x_{2}}{2}}$ of $\X^{'}_{n}$, where $K^{'}_n$ is given by \eqref{eq_x1}, converges to $K^{'}$ uniformly on compact subsets of $x_{1},x_{2}\in \Z_{\ge 2}$ and $u_{1},u_{2}\in \R_{\ge 0}$. Note that the just introduced multiplication by $n^{\frac{x_{1}-x_{2}}{2}}$ does not change the value of the correlations functions $\det[K^{'}_{n}(x_{i},u_{i};x_{j},u_{j})]$. Let us fix arbitrary $x_1,x_2\in \Z_{\ge 2}$, strictly positive $u_1$, $u_2$, and analyze the asymptotic behavior of the double contour integral in \eqref{eq_x1}.

The first step is to deform the contours of integration from the ones of Figure \ref{fig:contour1} to the ones of Figure \ref{Figure-unboundedcontour}. Using \eqref{eq_x2}, the $z$--dependent factors of the integrand are
 \begin{equation}
 \label{eq_x4}
 \frac{(z+1)(z+2)\cdots(z+x_2)}{\prod\limits_{i=1}^n (z+x_{2}-2n+2i)} \cdot \left(\frac{u_2}{\sqrt{n}}\right)^z \cdot \frac{1}{w+z+x_{2}-x_{1}+1}.
 \end{equation}
 The form of \eqref{eq_x4} implies that there are no $z$--poles of the integrand to the right from the contour of Figure \ref{fig:contour1}; hence, the value of the  integral does not change in the deformation. For each fixed $u_2$ and large enough $n$, on the new contours of Figure \ref{Figure-unboundedcontour}, the expression \eqref{eq_x4} rapidly decays as $\Re z\to +\infty$ (uniformly in $n$). This observation allows us to control the part of the integral corresponding to large $\Re z$, and it remains to study the $n\to\infty$ asymptotics of the integrand for finite $z$ and $w$.

We use \eqref{eq_Gamma_assy} to compute the asymptotic behavior of $\prod_{i=1}^n$ in \eqref{eq_x4}:
\begin{align*}
&\prod_{i=1}^{n}(z+x_{2}-2n+2i)
=(-1)^{n}\prod_{i=1}^{n}(-z-x_{2}+2n-2i)\\
&=(-1)^{n}2^{n}\prod_{i=1}^{n}\left(-\frac{z-x_{2}}{2}+n-i\right)
=(-1)^{n}2^{n}\frac{\Gamma(-\frac{z+x_{2}}{2}+n)}{\Gamma(-\frac{z+x_{2}}{2})}\\
&={(-1)^{n}}2^{n} (n-1)! \frac{n^{-\frac{z+x_{2}}{2}}}{\Gamma(-\frac{z+x_{2}}{2})}\left(1+O(n^{-1})\right).
\end{align*}
Similarly,
$$\prod_{i=1}^{n}(x_{1}-w-2n+2i-1)=(-1)^{n}2^{n}(n-1)! \frac{n^{\frac{-x_{1}+w+1}{2}}}{\Gamma(\frac{-x_{1}+w+1}{2})}\left(1+O(n^{-1})\right) .$$
Thus, for $\lambda=\Delta_n$, we have
$$\frac{G_{\lambda}(z+x_{2}-n)}{G_{\lambda}(x_{1}-n-1-w)}=
\frac{\Gamma(z+x_{2}+1)}{\Gamma(x_{1}-w)}\frac{\Gamma(-\frac{z+x_{2}}{2})}{\Gamma(\frac{-x_{1}+w+1}{2})}n^{\frac{-x_{1}+w+1}{2}}n^{\frac{z+x_{2}}{2}} \left(1+O(n^{-1})\right).$$
Hence, the integrand in the double contour integral part of $K^{'}_{n}(x_{1},u_{1};x_{2},u_{2}) n^{\frac{x_{1}-x_{2}}{2}}$,  converges as $n\to\infty$ to
$$\frac{\Gamma(-w)}{\Gamma(z+1)}\frac{\Gamma(z+x_{2}+1)}{\Gamma(x_{1}-w)}\frac{\Gamma(-\frac{z+x_{2}}{2})}{\Gamma(\frac{-x_{1}+w+1}{2})}\frac{u_{1}^{w}u_{2}^{z}}{w+z+x_{2}-x_{1}+1}.$$
Combining with the straightforward asymptotics of the indicator term, we conclude that for all strictly positive $u_1$ and $u_2$,
$$
 \lim_{n\to\infty} K^{'}_{n}(x_{1},u_{1};x_{2},u_{2}) n^{\frac{x_{1}-x_{2}}{2}}=K^{'}_{n}(x_{1},u_{1};x_{2},u_{2}).
$$
Clearly, the convergence is uniform over $u_1,u_2$ in compact subsets of $(0,+\infty)$. It remains to show that there is no explosion for $u_1$ or $u_2$ near $0$, which is done in exactly the same way as we did in Remark \ref{rem:unifconvergence}, i.e.\ by interpreting the integral as a sum of residues at simple $z-$ and $w-$poles at non-negative integers.
\end{proof}

\begin{proof}[Proof of Theorem \ref{thm:def2}] The proof is the same as for Theorem \ref{thm:def1}, with only some slight difference in calculation. This time, for $\lambda=\Delta_{n}\setminus(n-k,k)$ we have
\begin{align*}
G_{\lambda}(z+x_{2}-n)=\frac{\Gamma(z+x_{2}+1)}{\prod\limits_{i=1}^{n}(z+x_{2}-2n+2i)}\cdot\frac{z+x_{2}-2k}{z+x_{2}-2k+1},
\end{align*}
\begin{align*}
G_{\lambda}(x_{1}-n-1-w)=\frac{\Gamma(x_{1}-w)}{\prod\limits_{i=1}^{n}(x_{1}-w-2n+2i-1)}\cdot\frac{x_{1}-w-2k-1}{x_{1}-w-2k}.
\end{align*}
The additional fractions $\frac{z+x_{2}-2k}{z+x_{2}-2k+1}$ and $\frac{x_{1}-w-2k-1}{x_{1}-w-2k}$  (as compared to $\lambda=\Delta_n$ case) propagate to the final answer without changes.
\end{proof}

\subsection{Connection to Anti-symmetric GUE}\label{subsec:check}

In this section we prove that the distribution of the limiting process of Theorem \ref{thm:def1} coincides with the corners process of aGUE.

\begin{prop} \label{Proposition_kernel_section}
\label{eq_kernel_section}
$K^{'}(2k,u_{1};2k,u_{2})$ of Theorem \ref{thm:def1} for $u_{1},u_{2}\in \R_{\ge 0}$, $k\in\mathbb Z_{>0}$, is equal to
\begin{align}\label{eq_kernel_section_1}
2^{k+1}\sum_{i=0}^{\infty}\sum_{j=0}^{k-1}&(-1)^{i+k}\frac{u_{1}^{2j}}{(2k-2j-1)! (2j)! \, \Gamma(j+\frac{1}{2}-k)}\cdot\frac{u_{2}^{2i} \cdot \prod_{a=1}^{k}(2i+2a-1) }{i!}\cdot\frac{1}{2j+2i+1}.
\end{align}
\end{prop}
\begin{remark} \label{Remark_match_small_k}When $k=1$, we simplify $K^{'}(2,u_{1};2,u_{2})=\frac{2}{\sqrt{\pi}}e^{-u_{2}^{2}}$. There's only one particle on level $\{2\}\times \R_{\ge 0}$ which corresponds to the limit of the entry of the SYT at lower corner, with coordinate $(n-1,1)$.

When $k=2$, we simplify $K^{'}(4,u_{1};4,u_{2})=\frac{1}{\sqrt{\pi}}[(1-2u_{1}^{2})(1-2u_{2}^{2})+2]e^{-u_{2}^{2}}$. There are two particles on level $\{4\}\times \R_{\ge 0}$ which correspond to the two entries of the SYT with coordinates $(n-3,1)$ and $(n-2,2)$.
\end{remark}
\begin{proof}[Proof of Proposition \ref{Proposition_kernel_section}]
$K^{'}(2k,u_{1};2k,u_{2})$ of Theorem \ref{thm:def1} is given by
\begin{align*}
    \frac{1}{(2\pi \ii)^{2}}\oint_{C_{z}[0,\infty)}dz\oint_{C_{w}[0,2k)}dw \frac{\Gamma(-w)}{\Gamma(z+1)}\frac{\Gamma(z+2k+1)}{\Gamma(2k-w)}\frac{\Gamma(-\frac{z+2k}{2})}{\Gamma(\frac{-2k+w+1}{2})}\frac{(u_{1})^{w}(u_{2})^{z}}{w+z+1},
\end{align*}
The $w$--integral is evaluated as the sum of residues at simple poles at even integers $w=0,2,4\dots,2k-2$ and we have
\begin{multline*}
 \mathrm{Res}_{w=2j} \left[\frac{\Gamma(-w)}{\Gamma(2k-w)\Gamma(\frac{-2k+w+1}{2})} \cdot
\frac{(u_{1})^{w}}{w+z+1}\right]\\=  \mathrm{Res}_{w=2j} \left[\frac{1}{\Gamma(\frac{-2k+w+1}{2}) (-w)(-w+1)\cdots(-w+2k-1)} \cdot
\frac{(u_{1})^{w}}{w+z+1}\right]\\= -\left[\frac{1}{\Gamma(\frac{-2k+2j+1}{2})\,
(2k-1-2j)!(2j)!
} \cdot
\frac{(u_{1})^{2j}}{2j+z+1}\right],
\end{multline*}
which matches the $j$--dependent factors in \eqref{eq_kernel_section_1}. The remaining $z$--integral for the $j$--th term becomes
\begin{align*}
    \frac{1}{(2\pi \ii)}\oint_{C_{z}[0,\infty)}\frac{\Gamma(z+2k+1) \Gamma(-\frac{z+2k}{2})}{\Gamma(z+1)}
    \frac{(u_{2})^{z}}{2j+z+1} dz,
\end{align*}
The last integral is evaluated as the sum of the residues at $z=0,2,4,\dots$. Using the fact that the residue of the Gamma function at a simple pole at $(-n)$, $n=1,2,\dots,$ is $\tfrac{(-1)^n}{n!}$, we get
\begin{multline*}
 \mathrm{Res}_{z=2i} \left[\frac{\Gamma(z+2k+1) \Gamma(-\frac{z+2k}{2})}{\Gamma(z+1)}
    \frac{(u_{2})^{z}}{2j+z+1}\right]=-2\frac{(-1)^{i+k}}{(i+k)!} \cdot\frac{(2i+2k)!}{(2i)!}\cdot \frac{(u_2)^{2i}}{2j+2i+1}
    \\=-2^{k+1}(-1)^{i+k} \frac{\prod_{a=1}^k(2i+2a-1)}{i!}\cdot \frac{(u_2)^{2i}}{2j+2i+1},
\end{multline*}
which matches the $i$--dependent factors in \eqref{eq_kernel_section_1}.
\end{proof}
\begin{prop}\label{Proposition_kernel_section_2}
$K^{'}(2k,u_{1};2k,u_{2})$ of Theorem \ref{thm:def1} for $u_{1},u_{2}\in \R_{\ge 0}$, $k\in\mathbb Z_{>0}$, is also equal to
\begin{equation}
\label{eq_kernel_section_2}
 e^{-(u_2)^{2}}\sum_{l=0}^{k-1}\frac{H_{2l}(u_1)H_{2l}(u_2)}{ 2^{2l-1} (2l)! \sqrt{\pi}},
\end{equation}
where  $H_i(u)$ are Hermite polynomials, as in \eqref{eq_Cor_intro}  and Theorem \ref{thm:ague}.
\end{prop}
\begin{proof}
Our task is to show that \eqref{eq_kernel_section_1} and \eqref{eq_kernel_section_2} match. The proof is induction in $k$. For $k=1$, this is a content of Remark \ref{Remark_match_small_k}. Subtracting \eqref{eq_kernel_section_1} at $k+1$ and at $k$, we get
\begin{multline}\label{eq_x6}
2^{k+1}\sum_{i=0}^{\infty}\sum_{j=0}^{k}(-1)^{i+k}\frac{u_{1}^{2j}}{(2k-2j+1)! (2j)! \, \Gamma(j+\frac{1}{2}-k)}\cdot\frac{u_{2}^{2i} \cdot \prod_{a=1}^{k}(2i+2a-1) }{i!}\cdot\frac{1}{2j+2i+1}
\\ \times \left[-2 (j-k-1/2)(2i+2k+1)-(2k-2j+1)(2k-2j) \right]
\\=2^{k+1}\sum_{j=0}^{k}\frac{u_{1}^{2j}}{(2k-2j)! (2j)! \, \Gamma(j+\frac{1}{2}-k)} \sum_{i=0}^{\infty} (-1)^{i+k} \frac{u_{2}^{2i} \cdot \prod_{a=1}^{k}(2i+2a-1) }{i!}
\end{multline}

On the other hand, subtracting \eqref{eq_kernel_section_2} at $k+1$ and at $k$, we get
\begin{equation}
\label{eq_x7}
 e^{-(u_2)^{2}}\frac{H_{2k}(u_1)H_{2k}(u_2)}{ 2^{2k-1} (2k)! \sqrt{\pi}},
\end{equation}
The explicit expression for the Hermite polynomials \cite[Chapter 5.5]{Szego} is
$$
H_{2k}(u_{1})=\sum_{j=0}^{k}(-1)^{k-j} 2^{2j} \frac{(2k)!}{(k-j)!(2j)!}u_{1}^{2j}
$$
In order to match with $u_1$--dependent part of \eqref{eq_x6} we write
$$
 \Gamma(j+\tfrac{1}{2}-k)= \frac{\Gamma(\tfrac12)}{(\tfrac{1}{2}-1) (\tfrac{1}{2}-2) \cdots (\tfrac{1}{2}+j-k)}=\frac{(-1)^{k-j} 2^{k-j} \sqrt{\pi}  }{1\cdot 3 \cdot 5 \cdots (2k-2j-1)}= \frac{(-1)^{k-j}  2^{2k-2j} \sqrt{\pi} (k-j)!   }{ (2k-2j)!}
$$
Hence, \eqref{eq_x6} gets transformed into
\begin{multline*}
 2^{k+1}\sum_{j=0}^{k}\frac{u_{1}^{2j}}{(2j)! (-1)^{k-j}  2^{2k-2j} \sqrt{\pi} (k-j)!  } \sum_{i=0}^{\infty} (-1)^{i+k} \frac{u_{2}^{2i} \cdot \prod_{a=1}^{k}(2i+2a-1) }{i!}\\= \frac{2^{1-k}}{(2k)!\sqrt{\pi}} H_{2k}(u_1) \sum_{i=0}^{\infty} (-1)^{i+k} \frac{u_{2}^{2i} \cdot \prod_{a=1}^{k}(2i+2a-1) }{i!}
\end{multline*}
Comparing with \eqref{eq_x7}, it remains to show that:
$$
e^{-(u_2)^{2}}\frac{H_{2k}(u_2)}{ 2^{k} }\stackrel{?}{=} \sum_{i=0}^{\infty} (-1)^{i+k} \frac{u_{2}^{2i} \cdot \prod_{a=1}^{k}(2i+2a-1) }{i!}.
$$
Or, equivalently,
\begin{equation}
\label{eq_x8}
H_{2k}(u_2)\stackrel{?}{=}e^{(u_2)^{2}} 2^k \sum_{i=0}^{\infty} (-1)^{i+k} \frac{u_{2}^{2i} \cdot \prod_{a=1}^{k}(2i+2a-1) }{i!}.
\end{equation}
The identity \eqref{eq_x8} is a corollary of the Rodrigues formula for Hermite polynomials \cite[Chapter 5.5]{Szego}
$$
H_{n}(u)=e^{u^{2}} (-1)^n (\partial_u)^n e^{-u^2}$$
Indeed, we have
\begin{multline}
 e^{u^{2}} (-1)^{2k} (\partial_u)^{2k} e^{-u^2}= e^{u^2}  (\partial_u)^{2k} \left[\sum_{i=0}^{\infty} \frac{(-u^2)^i}{i!}\right]=e^{u^2}  (\partial_u)^{2k} \left[\sum_{i=0}^{\infty} (-1)^{i+k}\frac{u^{2i+2k}}{(i+k)!}\right]\\= e^{u^2}   \left[\sum_{i=0}^{\infty} (-1)^{i+k}\frac{u^{2i}(2i+1)(2i+2)\cdots(2i+2k)}{(i+k)!}\right]\\=e^{u^2}  \left[\sum_{i=0}^{\infty} (-1)^{i+k}\frac{u^{2i}2^k(2i+1)(2i+3)\cdots(2i+2k-1)}{i!}\right].\qedhere
\end{multline}
\end{proof}

\begin{thm}\label{thm:check}
The point process  $\X^{'}$ in Theorem \ref{thm:def1} has the same distribution as the point process $\X$ --- the corner process of aGUE of Definition \ref{def:cornerprocess}.
\end{thm}
\begin{proof}
 {\bf Step 1.}  For each $k=1,2,\dots$, each of the processes $\X^{'}$ and $\X$ has $k$ particles on level $2k$ (i.e.\ with the first coordinate $2k$). Let us show the marginal distributions describing the joint law of these $k$ particles are the same. By Proposition \ref{Proposition_kernel_section_2}, for $\X^{'}$ these particles form a determinantal point process with kernel \eqref{eq_kernel_section_2}. By Theorem \ref{thm:ague}, for $\X$ these particles also form a determinantal point process with kernel given by plugging $x=y=2k$ into \eqref{eq_Cor_extended}. The kernels match and, hence, so do the point processes.

 {\bf Step 2.} Let us now fix $k$ and compare the joint distribution of particles in $\X^{'}$ and in $\X$ on levels $l=1,2,\dots,2k$. By part \ref{eq_aGUE_cond} in Proposition \ref{prop:aGUE}, the conditional law of the particles on levels $l=1,2,\dots,2k-1$ given $k$ particles on level $k$ is uniform (subject to interlacing conditions) for $\X$. On the other hand, $\X^{'}$ possesses the same conditional uniformity, because it was obtained by a scaling limit from uniformly random $\mathrm{PYT}$ $\T_{\Delta_{n}}$ and conditional uniformity is preserved in limit transitions. Combining with Step 1, we conclude that the joint distributions of particles on levels $l=1,2,\dots,2k$ are the same for $\X^{'}$ and $\X$. Since $k$ is arbitrary, we are done.
\end{proof}

\subsection{Proof of Theorem \ref{thm:syt}}
Theorems \ref{thm:def1} and \ref{thm:def2} show that the point process $\X^{'}_{n}$, $\X^{'}_{k,n}$ induced from $n^{\frac{1}{2}}(1-\T(l,m))$, the rescaled entries of PYT of shapes $\Delta_{n}$ and $\Delta_{n}\setminus (n-k,k)$, converge weakly as $n\to\infty$ to $\X^{'}$ and $\X^{'}_{k}$, respectively. In particular, treating the entries of the PYT of these two shapes as infinite random vectors, they converges in the sense of finite dimensional distribution to the positions of the corresponding particles in $\X^{'}_{n}$ and $\X^{'}_{k,n}$, respectively.

On the other hand, by Lemma \ref{lemma:coupling}, there's a coupling of PYT $\T$ and SYT $T$ that, for any $(l,m)$, $$n^{\frac{1}{2}}\left[\Bigl(1-\tfrac{T(l,m)}{N}\Bigr)-\Bigl(1-\T(l,m)\Bigr)\right]$$
converges to $0$ in probability. Combining the above two results, we obtain the $n\to\infty$ convergence for finite dimensional distributions of $$\left\{n^{\frac{1}{2}}\left(1-\frac{T(l,m)}{N}\right)\right\}_{l\ge 2,\,\, 1\le m \le\lfloor\frac{l}{2}\rfloor}$$
to the limits given by particles of  $\X^{'}$ and $\X^{'}_{k}$. It remains to use Theorem \ref{thm:check} to match $\X^{'}$ with $\X$. \qed

\section{Asymptotics for spacings}\label{sec:correspondence}

We start this section by studying general rotationally-invariant point processes on a circle, giving two different definitions of a spacing between particles in such processes, and proving that their distribution functions satisfy relations, which are discrete versions of \eqref{eq_density_of_spacing_1} and \eqref{eq_density_of_spacing_2}. When specialized to the random sorting networks setting, these spacings are precisely the ones discussed in the introduction. After that we recall the Edelman-Greene bijection between standard Young tableaux and sorting networks. In the last subsection
we combine all the ingredients to finish the proofs of Theorems \ref{thm:firstswap}, \ref{thm:gap1}, \ref{thm:derivative}, and \ref{thm:correspondence}.

\subsection{Generalities on spacings} \label{Section_spacings_general}
Consider a random point process $\mathcal P$ on a discrete circle of length $K\in \Z_{\ge 2}$. We index the possible positions of the particles by $1$, $2$, \dots, $K$ in the clockwise order and refer to the midpoint between $1$ and $K$ as the origin, see Figure \ref{Figure_circle}. Throughout this section we assume that the distribution of the point process is invariant under rotations of the circle; we also silently assume that almost surely $\mathcal P$ has at least one particle.

\begin{figure}[htpb]
    \begin{center}
        \includegraphics[scale=0.55]{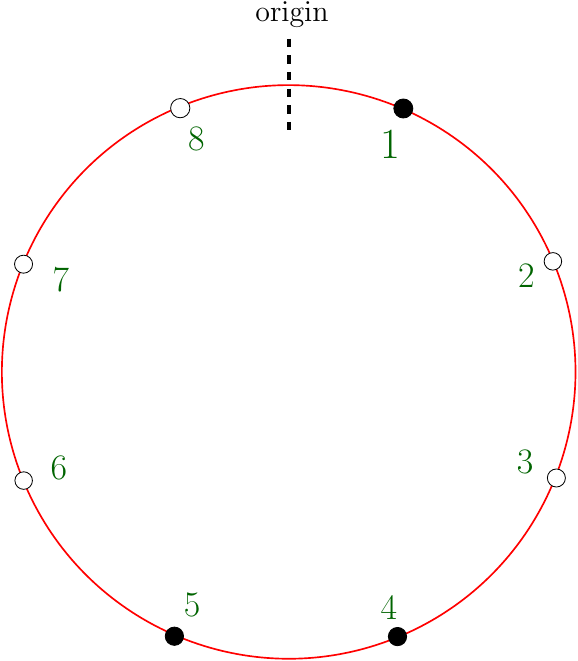} \qquad \qquad
        \includegraphics[scale=0.55]{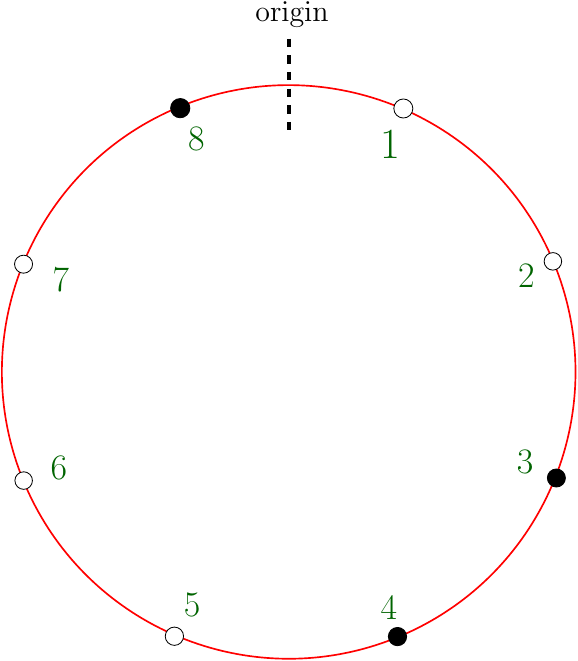}
        \caption{Two configurations of point process on discrete circle of length $K=8$ described in Example \ref{ex:circle}. Particles are shown in black.  \label{Figure_circle}}
    \end{center}
\end{figure}

We use $\mathcal P$ to define several random variables with positive integer values. We define the waiting time $W\in \{1,2,...,K\}$ to be the smallest value of $\ell\ge 1$ such that there is a particle at position $\ell$. We let the spacing $\mathrm{Sp}_{1}\in \{1,2,...,K\}$ to be the distance between two adjacent particles on the circle: one to the left from the origin and one to the right from the origin. By distance we mean here one plus the number of the holes between the particles (counted along the arc including the origin), so that if there are particles at positions $1$ and $K$, then the distance is $1$.

Next, we define the conditional spacing $\mathrm{Sp}_{2}\in \{1,2,...,K\}$ to be a random variable whose distribution is that of $W$ conditional on having a particle at position $K$. Note that $W$ and $\mathrm{Sp}_1$ are random variables (functions) on the probability space of $\mathcal P$, but $\mathrm{Sp}_2$ should be defined on a different probability space.

\begin{ex}\label{ex:circle}
Figure \ref{Figure_circle} gives two possible configurations of a point process on the discrete circle of length $K=8$. On the first configuration, $\mathrm{Sp}_{1}=4$ and $W=1$. On the second configuration, $\mathrm{Sp}_{1}=W=3$. Since there is a particle at $K$, we can also think of $\mathrm{Sp}_{2}$ being equal to $3$ for the second configuration.
\end{ex}

Let $f_1(\cdot)$, $f_2(\cdot)$, and $g(\cdot)$ be the probability mass functions of $\mathrm{Sp}_1$, $\mathrm{Sp}_2$, and $W$, respectively:
$$
 f_1(\ell)=\pr{\mathrm{Sp}_{1}=\ell}, \qquad f_2(\ell)=\pr{\mathrm{Sp}_{2}=\ell}, \qquad g(\ell)=\pr{W=\ell}, \qquad l=1,2,\dots.
$$
We also define $\rho$ to the the probability that there is a particle at position $K$ (in other words, this is the first correlation function or density of the point process $\mathcal P$).

\begin{prop}\label{prop:circle}
For any rotationally invariant point process $\mathcal P$, we have
\begin{equation}
-\Delta g(\ell)\cdot \ell=f_{1}(\ell),\qquad \ell=1,2,\dots,
\label{eq_relation_1}
\end{equation}
\begin{equation}
-\Delta g(\ell)=\rho \cdot f_{2}(\ell), \qquad \ell=1,2,\dots,
\label{eq_relation_2}
\end{equation}
where $\Delta$ is the forward difference operator: $\Delta g(\ell):=g(\ell+1)-g(\ell)$.
\end{prop}
\begin{proof} We claim that
\begin{equation}
\label{eq_x9}
-\Delta g(\ell)=\pr{\text{there are  particles at positions }K\text{ and } \ell, \text{ but not at }1,2,\dots,\ell-1}.
\end{equation}
Indeed, using rotational invariance of the law of $\mathcal P$, we have
\begin{align*}
 g(\ell)&=\pr{\text{there is a particle at position }\ell, \text{ but not at }1,2,\dots,\ell-1},\\
 g(\ell+1)&=\pr{\text{there is a particle at position }\ell, \text{ but not at }K, 1,2,\dots,\ell-1}.
\end{align*}

In order to prove \eqref{eq_relation_2},  it is now sufficient to rewrite the probability in the right-hand side of \eqref{eq_x9} as the product of probability of having a particle at $K$ (which is $\rho$) and conditional probability.

In order to prove \eqref{eq_relation_1}, note that by definition
\begin{equation}
\label{eq_x10}
 f_1(\ell)=\sum_{a=0}^{\ell-1}  \pr{\text{there are  particles at }K-a\text{ and } \ell-a, \text{ but not at }K-a+1,\dots,\ell-a-1}.
\end{equation}
By rotational invariance all terms in the right-hand side of \eqref{eq_x10} are equal. There are $\ell$ of them and each one is computed by \eqref{eq_x9}, leading to \eqref{eq_relation_1}.
\end{proof}

\begin{remark} A continuous version of Proposition \ref{prop:circle} for translationally invariant point processes on the real line $\mathbb R$ says that the following is true (under technical regularity assumptions on the point process, which we do not spell out, and which are needed to guarantee the existence of all the densities below): Suppose that $\mathrm{Sp}_{1}$, $\mathrm{Sp}_{2}$ and $W$ are spacing (distance between closest particles to the left and to the right from the origin), conditional spacing, and waiting time for an arrival of a particle, and let $f_1(x)$, $f_2(x)$, $g(x)$, $x\ge 0$ be probability densities of these random variables. Then we have
\begin{align*}
 -x \cdot \partial_x g(x)&=f_{1}(x),\\
 -\partial_x g(x)&=\rho\cdot f_{2}(x).
\end{align*}
where $\rho$ is equal to the first correlation function for the process.
\end{remark}

\subsection{Edelman-Greene bijection}
\label{Section_EG}

The relation of the random Young tableaux (which we were studying in Sections \ref{sec:pre} and \ref{sec:calculate}) with the random sorting networks relies on the Edelman--Green bijection \cite{EG}, which we now present.

The bijection takes a standard Young tableau $T$ of shape $\Delta_{n}$ as an input and outputs a sorting network. This correspondence maps the uniform measure on standard Young tableaux of shape $\Delta_n$ to the uniform measure on sorting networks of size $n$.

The bijection proceeds through the following algorithm, in which we use the standard $(i,j)$ coordinate system for Young tableaux, as in Section \ref{subsec:youngtableaux}:

\begin{enumerate}
\item Given a standard Young tableau $T$, find the box $(n-\ell,\ell)$ which contains the largest entry of $T$. Necessarily, this entry is $N=\frac{n(n-1)}{2}$ and $ 1\le \ell \le n-1$.

\item Set the first swap $s_{1}$ of the corresponding sorting network to be $\ell$.

\item Define the sliding path in the following way: the first box of the path is $(n-\ell,\ell)$. Compare the entries at $(n-\ell-1,\ell)$ and at $(n-\ell,\ell-1)$, and take the box with the larger entry as the second box (if only one of the boxes is inside the Young diagram, then take that one). Repeat this procedure (each time decreasing by $1$ either the first of the second coordinate of the box and moving in the direction of the larger entry), until you arrive at the box $(1,1)$, which is the last box of the sliding path.

\item Slide the entries on the sliding path in such a way that the maximal entry $N$ is removed and all the remaining entries on the path are moved to the neighboring boxes on the path. No entry remains at $(1,1)$ after the sliding. Then we increase all entries by $1$, and fill the box $(1,1)$ with new entry $1$.

\item After transformations of steps 1-4, $T$ becomes a new standard Young tableau of shape $\Delta_{n}$. Repeat steps 1-4 additional $(N-1)$ times to get swaps $s_{2}$,$s_{3}$,...,$s_{N}$ of the sorting network.
\end{enumerate}

Using the Edelman-Green bijection, the random variable $T_{FS,n}(k)$ of Theorem \ref{thm:firstswap} --- the first time swap $k$ occurs --- gets recast in terms of the uniformly random Young tableau.

\begin{lemma}
\label{Lemma_first_swap_through_SYT}
 Let $T_{\Delta_{n}}(l,m)$ be a uniformly random standard Young tableau of shape $\Delta_n$ in the rotated coordinate system of Section \ref{subsec:rotation}. The following distributional identity holds:
\begin{equation}
\label{eq_x12}
 T_{FS,n}(k)\stackrel{d}{=} N+1 -  T_{\Delta_{n}}(2k,1)
\end{equation}
\end{lemma}
\begin{proof}
 In each iteration of the Edelman--Greene algorithm the entry $(n-k,k)$ (in the standard $(i,j)$ coordinate system for Young tableaux, as in Section \ref{subsec:youngtableaux}) grows by $1$ until it becomes $N$, at which point the swap $s_k$ is added to the sorting network for the first time. The entry at $(n-k,k)$ in the $(i,j)$ coordinate system is the same as the entry $(2k,1)$ in the rotated $(l,m)$ coordinate system, leading to \eqref{eq_x12}.
\end{proof}

We can also compute the distribution of the conditional spacing $\widehat{\mathrm{Sp}}_{k,n}$ of Definition \ref{def:second}.

\begin{lemma}
\label{Lemma_conditional_spacing_through_SYT}
Let $T_{\Delta_{n}\setminus(n-k,k)}(l,m)$ be a uniformly random standard Young tableau of shape ${\Delta_{n}\setminus(n-k,k)}$ in the rotated coordinate system of Section \ref{subsec:rotation}. The following distributional identity holds:
\begin{equation}
\label{eq_x11}
 \widehat{\mathrm{Sp}}_{k,n}\stackrel{d}{=}
\begin{cases}
N-\max\bigl(T_{\Delta_{n}\setminus(n-k,k)} (2k-1,1), T_{\Delta_{n}\setminus(n-k,k)} (2k+1,1)\bigr), & \text{if}\;\; 2\le k\le n-2; \\
N-T_{\Delta_{n}\setminus(n-k,k)} (3,1), & \text{if}\;\; k=1; \\
N-T_{\Delta_{n}\setminus(n-k,k)} (2n-3,1), & \text{if}\;\; k=n-1.
\end{cases}
\end{equation}
\end{lemma}
\begin{proof}
 In order to obtain the law of $\widehat{\mathrm{Sp}}_{k,n}$, we need to condition on the first swap being $k$; through the Edelman-Green bijection this is the same as conditioning on the largest entry $N$ in the tableau to be in the box $(n-k,k)$ in the $(i,j)$ coordinate system. Note that if we condition a uniformly random standard Young tableau of shape $\Delta_n$ on the position of largest entry $T(n-k,k)=N$, then the rest is a uniformly random standard Young tableau of shape $\Delta_n\setminus(n-k,k)$.

 Once we do conditioning, the value of $1+\widehat{\mathrm{Sp}}_{k,n}$ becomes the number of the iterations of the Edelman-Greene algorithm when the largest entry of the tableau is at $(n-k,k)$ for the second time. After the first step of the algorithm, the entry at $(n-k,k)$ is
 $$1+\max\bigl(T_{\Delta_{n}\setminus(n-k,k)} (2k-1,1), T_{\Delta_{n}\setminus(n-k,k)} (2k+1,1)\bigr)$$
  and at each further step the entry grows by $1$, until it reaches $N$. Hence, we need $N-\max\bigl(T_{\Delta_{n}\setminus(n-k,k)} (2k-1,1), T_{\Delta_{n}\setminus(n-k,k)} (2k+1,1)\bigr)$ additional iterations, matching the first case in \eqref{eq_x11}. The second and the third cases correspond to the situations when $(n-k,k)$ has only one neighboring box in the tableau.
\end{proof}

\subsection{Proofs of the main theorems}

\begin{proof}[Proof of Theorem \ref{thm:firstswap}] By Lemma \ref{Lemma_first_swap_through_SYT}, we need to find the $n\to\infty$ asymptotics of
$$\Tb_{FS,n}(k)\stackrel{d}{=}N+1 -  T_{\Delta_{n}}(2k,1)= \frac{N}{n^{1/2}} \cdot n^{1/2}\left( \frac{1}{N}+1- \frac{T_{\Delta_{n}}(2k,1)}{N}\right).
$$
Recalling $N\sim \tfrac{1}{2}n^2$ and using \eqref{eq_limit_1} in Theorem \ref{thm:syt}, we get \eqref{eq_x13} with the right-hand side being $\tfrac{1}{2}$ times the law of the smallest eigenvalue in $2k\times 2k$ aGUE. The law of the latter is given by the Fredholm determinant through Theorem \ref{thm:ague} for the case $x=y=2k$ and general properties of the determinantal point processes (cf.\cite{Bor}).
\end{proof}
\begin{remark}
When $k=1$, there is only one particle at level $x=2$ and  $\Tb_{FS}(1)$ equals in distribution to the coordinate of this particle. The density of this random variable is given by $K(2,u;2,u)=\frac{2}{\sqrt{\pi}}e^{-u^{2}}$, $u\ge 0$, as in Remark \ref{Remark_match_small_k}. The $k=1$ result was first obtained in \cite[Theorem 1.7]{Rozinov}.
\end{remark}

\begin{proof}[Proof of Theorem \ref{thm:correspondence}] This is a combination of Theorem \ref{thm:derivative},  Lemma \ref{Lemma_conditional_spacing_through_SYT} with \eqref{eq_limit_2} in Theorem \ref{thm:syt}.
\end{proof}
\begin{remark}
When $k=1$, $Z'_{3}$ is the coordinate of the unique particle of $\X^{'}_{1}$ on level $x=3$. The density of this random variable is given by $K^{'}_{1}(3,u;3,u)=2ue^{-u^{2}}$, $u\ge 0$. Although without giving a formal definition, \cite{Rozinov} used our second definition of spacing implicitly, and obtained this $k=1$ result in \cite[Theorem 1.7]{Rozinov}. \end{remark}

\begin{proof}[Proof of Theorem \ref{thm:gap1}] Let $\Lambda_{-}=a-X\in \Z_{\ge 0}$ and $\Lambda_{+}=Y-a\in \Z_{> 0}$. We have $\Lambda_{-},\Lambda_{+}\ge 0$, and $\Lambda_{-}+\Lambda_{+}=\mathrm{Sp}_{k,n}$. Note that $\Lambda_{+}\overset{d}{=}\Tb_{FS,n}(k)$.

By translation invariance of Corollary \ref{Corollary_translation_invariance},
$$\pr{\Lambda_{-}\ge r,\, \Lambda_{+}\ge q}=\pr{\Lambda_{-}\ge r',\, \Lambda_{+}\ge q'},$$
if $r,q,r',q'\ge 0$ and $r+q=r'+q'$.  Hence, for any $u,v\ge 0$ by Theorem \ref{thm:firstswap}
\begin{align*}
\pr{2\frac{\Lambda_{-}}{n^{\frac{3}{2}}}\ge u,\, 2\frac{\Lambda_{+}}{n^{\frac{3}{2}}}\ge v}
= &\pr{\Lambda_{-}\ge  u\frac{n^{\frac{3}{2}}}{2},\, \Lambda_{+}\ge v\frac{n^{\frac{3}{2}}}{2}}
= \pr{\Lambda_{-}\ge 0,\, \Lambda_{+}\ge  u\frac{n^{\frac{3}{2}}}{2}+  v\frac{n^{\frac{3}{2}}}{2}}\\
= &\pr{2\frac{\Lambda_{+}}{n^{\frac{3}{2}}}\ge\frac{ u\frac{n^{\frac{3}{2}}}{2}+  v\frac{n^{\frac{3}{2}}}{2}}{\frac{n^{\frac{3}{2}}}{2}}} \stackrel{n\to\infty}{\longrightarrow} W(u+v),
\end{align*}
where $W(\cdot)$ is the distribution function $\pr{\Tb(k)\ge \cdot}$. Smoothness of $W(\cdot)$ and the above computation show that $\bigl(2\frac{\Lambda_{-}}{n^{\frac{3}{2}}},2\frac{\Lambda_{+}}{n^{\frac{3}{2}}}\bigr)$ converges in distribution as $n\to\infty$ to a random vector $(Z_{1},Z_{2})$, whose probability density we denote $f(x,y)$, $x,y\ge 0$. We also let $F(a,b):=\pr{Z_{1}> a, Z_{2}> b}$; we already know that $F(a,b)=W(a+b)$, $a,b\ge 0$. Denoting $F_1$ and $F_2$ the partial derivatives of $F$ in the first and second coordinates, respectively, we have $F_1(a,b)=F_2(a,b)=W'(a+b)$. Recall that $Z_{1}+Z_{2}$ is the scaled limit of $\mathrm{Sp}_{k,n}$ that we are interested in. We have:
\begin{align*}
\pr{Z_{1}+Z_{2}> b} & = \int_{0}^{b}\int_{b-y}^{\infty}f(x,y)dxdy+\int_{b}^{\infty}\int_{0}^{\infty}f(x,y)dxdy \\
 & = \int_{0}^{b}\left[-F_{2}(b-y,y)\right]dy+\int_{0}^{\infty}\left[-F_{1}(x,b)\right]dx\\
 & = -\int_{0}^{b}W'(b)dy-\int_{0}^{\infty}W'(x+b)dx\\
 & = -bW'(b)-\int_{b}^{\infty}W'(x)dx
\end{align*}
Differentiating the last identity in $b$ we get the desired \eqref{eq_density_of_spacing_1}.
\end{proof}

\begin{proof}[Proof of Theorem \ref{thm:derivative}] We fix $k$ and deal with periodic extension of the sorting network of Definition \ref{Definition_periodic_extension}. Given a random sorting network $(s_t)_{t\in\mathbb Z}$, we define a point process $\mathcal P$ on discrete circle of length $2N$ by declaring that a spot $i$ is occupied by a particle if and only if $s_i=k$; in other words, this is the point process of times of swaps $s_k$. By Corollary \ref{Corollary_translation_invariance} the process $\mathcal P$ is rotationally invariant and, therefore, the results of Section \ref{Section_spacings_general} apply. Since we would like to find the distribution of the conditional spacing, we rely on \eqref{eq_relation_2} --- the desired quantity is $f_2(\ell)$ in that formula, which is being connected to $g(\ell)$ and $\rho$. The asymptotics of $g(\ell)$ is computed in Theorem \ref{thm:firstswap}. In order to find $\rho$, we notice that by the Edelman--Greene bijection (cf.\  \cite[Proposition 9]{AHRV}), it is computed as
$$\rho=\frac{\#\{\text{Standard Young tableaux of shape } \Delta_n\setminus(n-k,k)\}}{\#\{\text{Standard Young tableaux of shape } \Delta_n\}}.$$
Both numerator and denominator are explicitly known (they can be computed by the hook length formula \cite{FRT}). Applying the Stirling's formula, we get as $n\to\infty$:
\begin{align}
\notag\frac{1}{\rho}= & \binom{n}{2}\frac{\prod_{1\le j<k}(2k-2j)}{\prod_{1\le j<k}(2k-1-2j)}\, \frac{\prod_{k< i\le n}(2i-2k)}{\prod_{k< i\le n}(2i-2k-1)}\\
= & \binom{n}{2}\frac{(2n-2-2k)!!}{(2n-1-2k)!!}\frac{(2k-2)!!}{(2k-1)!!}\sim \frac{\sqrt{\pi}}{4}n^{\frac{3}{2}}\frac{(2k-2)!!}{(2k-1)!!}.\label{eq_x14}
\end{align}
Now take  $0\le a< b$ and sum \eqref{eq_relation_2} over $\lfloor a\frac{n^{\frac{3}{2}}}{2}\rfloor\le \ell \le\lfloor b\frac{n^{\frac{3}{2}}}{2}\rfloor$. We get
\begin{equation}
\label{eq_x15}
\sum_{\lfloor a\frac{n^{\frac{3}{2}}}{2}\rfloor}^{\lfloor b\frac{n^{\frac{3}{2}}}{2}\rfloor}f_{2}(\ell)=\frac{1}{\rho}\left(g\left(\lfloor a\frac{n^{\frac{3}{2}}}{2}\rfloor\right)-g\left(\lfloor b\frac{n^{\frac{3}{2}}}{2}\rfloor+1\right) \right).
\end{equation}
We send $n\to\infty$ using Theorem \ref{thm:firstswap}, \eqref{eq_x14} and the following lemma, whose proof we leave as an exercise for the reader (the monotonicity condition is implied by \eqref{eq_relation_2}).
\begin{lemma}
 Let $\xi_1,\xi_2,\dots$ be $\mathbb Z_{\ge 0}$--valued random variables, such that for each $n$ the probabilities $\pr{\xi_n=\ell}$ depend on $\ell=0,1,2\dots$ in a monotone way. Suppose that we have a distributional limit $\lim\limits_{n\to\infty} \frac{2 \xi_n}{n^{3/2}}\stackrel{d}{=}\xi$, where $\xi$ is an absolutely continuous random variable with density $h(x)$. Then for each $a\ge 0$
 \begin{equation}
   \lim_{n\to\infty}  \frac{n^{3/2}}{2}\pr{\xi_n=\lfloor a\frac{n^{\frac{3}{2}}}{2}\rfloor}= h(a).
 \end{equation}
\end{lemma}
Hence, in terms of $\widehat{\mathrm{Sp}}_{k,n}$ of Theorem \ref{thm:derivative}, as $n\to\infty$ \eqref{eq_x15} implies
\begin{equation}
 \lim_{n\to\infty} \pr{  a\le  2\frac{\widehat{\mathrm{Sp}}_{k,n}}{n^{\frac{3}{2}}}\le b}=  \frac{\sqrt{\pi}}{2}\frac{(2k-2)!!}{(2k-1)!!}\left[\frac{\partial}{\partial b}\bigl(\pr{ \Tb_{\rm{FS}}(k)> b}\bigr)-\frac{\partial}{\partial a}\bigl(\pr{ \Tb_{\rm{FS}}(k)> a}\bigr)\right].
\end{equation}
Thus, $ 2\frac{\widehat{\mathrm{Sp}}_{k,n}}{n^{\frac{3}{2}}}$ converges in distribution to a random variable of density given by \eqref{eq_density_of_spacing_2}.
\end{proof}

\begin{remark}
We expect that an alternative proof of Theorem \ref{thm:gap1} can be obtained by following the lines of the just presented argument, but using \eqref{eq_relation_1} instead of \eqref{eq_relation_2}.
\end{remark}
\begin{remark}
 Theorem \ref{thm:derivative} and Theorem \ref{thm:correspondence} give two different expressions for the distribution of the same random variable $\widehat Z_k$. It would be interesting to find a direct (i.e.\ avoiding taking the limit from the sorting networks) proof that these expressions coincide.
\end{remark}

\end{document}